\definecolor{webgreen}{rgb}{0,.4,0}
\definecolor{webbrown}{rgb}{.4,0,0}
\renewcommand{\dateseparator}{, }
\newcommand{\todaymy}{\shortmonthname ~ {\the\day}\dateseparator \the\year}
\def\blfootnote{\xdef\@thefnmark{}\@footnotetext}
\newtheorem{Thm}{Theorem}[section]
\newtheorem{Prop}[Thm]{Proposition}
\newtheorem{Cor}[Thm]{Corollary}
\theoremstyle{definition}
\newtheorem*{ack}{Acknowledgements}
\theoremstyle{remark}
\newtheorem{Def}[Thm]{Definition}
\newtheorem{Rem}[Thm]{Remark}
\newtheorem{convention}[Thm]{Convention}
\newtheorem{Exp}[Thm]{Example}
\numberwithin{equation}{section}
\newcommand{\<}{\langle}
\renewcommand{\>}{\rangle}
\DeclareMathOperator{\Real}{Re}
\DeclareMathOperator{\Imag}{Im}
\DeclareMathOperator{\id}{\mathbf{1}}
\DeclareMathOperator{\del}{\mathbf{d}}
\DeclareMathOperator{\E}{\mathbf{E}}
\DeclareMathOperator{\Sc}{\mathbf{S}}
\def\H{\mathscr H}
\def\L{\mathscr L}
\def\D{\mathscr D}
\def\Del{\boldsymbol{\Delta}}
\def\Di{\mathbf{D}}
\def\Re{\mathbb R}
\def\Rat{\mathbb Q}
\def\Co{\mathbb C}
\def\Z{\mathbb Z}
\def\N{\mathbb N}
\def\e{\mathbb E}
\def\B{\mathbf{B}}
\def\A{\mathbf{A}}
\def\pr{\mathbb{P}}
\def\ba{\mathbf{a}}
\def\ts{\tilde{\mathbf{s}}}
\def\bs{\mathbf{s}}
\def\eps{\varepsilon}
\begin{document}
\title[]{Probabilistic renormalization and analytic continuation}
\author[]{Gunduz Caginalp}
\author[]{Bogdan Ion}
\address{Department of Mathematics, University of Pittsburgh, Pittsburgh, PA 15260}
\email{caginalp@pitt.edu}
\email{bion@pitt.edu}
\subjclass[2010]{40A05, 40G99, 30B40, 30B50, 11M41}
\date{}
\blfootnote{Aug 29, 2020}
\keywords{divergent series, renormalization, Dirichlet series}

\begin{abstract}
We introduce a theory of probabilistic renormalization for series, the renormalized values being encoded in the expectation of a certain random variable on the set of natural numbers. We identify a large class of weakly renormalizable series of Dirichlet type, whose analysis depends on the properties of a (infinite order) difference operator that we call Bernoulli operator. For the series in this class, we show that the probabilistic renormalization is compatible with analytic continuation. The general zeta series for $s\neq 1$ is found to be strongly renormalizable and its renormalized value is given by the Riemann zeta function. 
\end{abstract}

\maketitle

\thispagestyle{empty}

\section{Introduction}

The term \emph{renormalization} essentially refers to a collection of methodologies with a common approach rather than a specific methodology. In physical terms, the idea is that when one has a very large number of interacting particles that have a cooperative behavior such as a divergence at a critical temperature (e.g., a statistical mechanics system consisting of spins on a two-dimensional or three-dimensional lattice), then one can reduce the size of the system by averaging within a particular geometric configuration. For example, one can consider squares of neighboring particles, and average the spins in each square in some way. Without rescaling their interaction strength, however, one can expect only a trivial result such as $0$ or $\infty.$ Thus, the second, crucial step in renormalization is to rescale the interaction strength in such a way as to obtain a finite fixed point that is unique. By developing this methodology, Wilson \cite{WK} showed that key divergence exponents that could previously be calculated only with a \textit{tour de force} of mathematical methods, could be in fact calculated in a very simple manner (e.g., by solving a quartic equation). Renormalization can also be considered in a broader context, as shown in the book by Creswick, Farach and Poole \cite{CFP}, who demonstate the methology on Cantor sets, fractal shapes and random walks before discussing statistical mechanics models. In particular they show that classical results such as the Central Limit Theorem arise as a consequence of this approach of averaging and rescaling.

The analysis of divergent series and integrals is often crucial for our current understanding in certain branches of physics, and the objects that appear in the physical contexts usually also have number-theoretical relevance (e.g. the analysis of the Casimir effect \cite{RV}, or the two regularizations of the $H=xp$ model \cites{BK,Con}; see also \cite{SH} for a general review on physical approaches to the Riemann Hypothesis). The main techniques through which the particular cases that appear in physical contexts are typically handled are regularization and analytic continuation, but the literature contains a larger range of techniques that approach the phenomena of divergence from many points of view.  The standard reference in this respect is Hardy's  \emph{Divergent Series} \cite{Har} (see also \cite{Ber}) which describes an entire range of summability methods that
can be used to handle divergent series by starting with new definitions that render the series convergent in a classical sense.

To our knowledge, a systematic approach of divergent series and integrals based on renormalization ideas has not yet been developed. We initiate here a theory of \emph{probabilistic renormalization} for (divergent) series and we expect that a corresponding framework can be developed for integrals as well as for higher dimensional series and integrals. The basic idea, stemming from the original renormalization techniques, is that the \qq{divergent} part of the series can be canceled out by subtracting some  (possibly re-scaled) version of the same series, thus leaving behind some finite value that can be interpreted as the difference between the \qq{convergent} parts of the two series. There are, however, two subtleties that we emphasize below.

Let $\ba=(a_n)_{n\geq 0}$ denote a sequence and fix $\pr$ a probability measure on $\N$ for which any subset of $\N$ is measurable. Let $m\in \N$. We consider the sequence of differences of partial series for $\sum_{n\geq 0} a_n$ and  $\sum_{n\geq 0} a_{mn}$ as follows. For $0\leq j< m$ consider
$\bs_{[j]}$ defined by
\begin{equation}\label{ps1}
\bs_{[j],n}=\sum_{i=0}^{n} a_i -\sum_{i=0}^{(n-j)/m} a_{mi},
\end{equation}
and seen as a function of $n$ that is defined \emph{only} for $n$ such that $(n-j)/m$ is an integer (hence $n$ and $j$ are in the same residue class modulo $m$). The first subtlety has to do with the extension of $\bs_{[j]}$ to a function on $\N$. For this, we consider the operators $\E_m$  and $\Del_m=\E_m-\id$. The operator $\E_m$ acts on the function $\bs_{[j]}$ as $\E_m \bs_{[j],n}=s_{[j], n+m}$, for any $n\equiv j \mod m$, and $\id$ is the identity operator. For $h>0$, we denote
 \begin{equation}\label{ps2}
 \E^h_{m}=\sum_{n=0}^\infty \binom{h}{n}\Del_m^n.
 \end{equation}
 The definition and notation are motivated by the fact that the expression \eqref{ps2} is the Newton expansion for $(\id+\Del_m)^h$, and so, it serves as replacement for the action of the fractional shift operator $\E_m^h$.
The extension of  $\bs_{[j]}$ to a function on $\N$ is then defined as
$$
s_{[j], n}=\E_{m}^{\{\frac{n-j}{m}\}}  \bs_{[j], m\lfloor\frac{n-j}{m}\rfloor+j},\quad \text{for } n\geq j,
$$
and by assigning some arbitrary value for $n<j$; $\{N\}$ and $\lfloor N\rfloor$ denote the fractional part and, respectively, the integer part of $N$. 

Such an extension exists if the series \eqref{ps2} converges when applied to the function \eqref{ps1}. The extended functions $\bs_{[0]},\dots,\bs_{[m-1]}$ are regarded as random variables on $\N$, and their role in our setting is that the difference between the partial sums  for the series $\sum_{n\geq 0} a_n$ and  $\sum_{n\geq 0} a_{mn}$ takes the values specified by each $\bs_{[j]}$ with probability $1/m$. The random variable 
\begin{equation}\label{ps3}
X_{\ba}=\frac{1}{m}\left( \bs_{[0]}+\cdots+\bs_{[m-1]} \right),
\end{equation}
thus represents the function that gives the average value for the difference of the two partial sums. Its expectation $\e[X_\ba]$  with respect to $\pr$ can thus be regarded as the expected value of the difference between the two series. A successful cancelation of the \qq{divergence} occurs when $\e[X_\ba]<\infty$.  If this phenomenon is observed systematically for all possible scaling factors $m$ then we say that the series $\sum_{n\geq 0} a_n$ is \emph{weakly renormalizable}.   If there exists $s,S\in \Co$ such that 
\begin{equation}
\e[X_\ba]=S(1-m^s)\quad \text{for all } m\geq 1,
\end{equation}
then we say that the series is \emph{strongly renormalizable} and its renormalized value is $S$. We expect that a strongly renormalizable series would necessarily exhibit some \emph{scale invariance} property.

The second subtlety is the use of a \emph{finitely-additive}, translation invariant $\pr$. While one can use usual countably-additive measures of $\N$ to compute the expectation, we argue that the use of \emph{finitely-additive} measures has certain advantages. First,  we regard the translation invariance property as a substitute for the non-existent uniform probability measure on $\N$. Note that there are no countably-additive probability measures that are translation invariant. Thus, the translation invariance guarantees that the measure does not act as a regularization factor and thus $\e[X_\ba]$ accurately reflects the actual values of  $X_\ba$.  Second, a finitely-additive, translation invariant assigns zero measure to any finite set and so the values of $X_\ba$ on any finite set of $\N$ (such as the arbitrary values of $\bs_{[j]}$ assigned for $n<j$) do not alter $\e[X_\ba]$. 

This set-up is inspired by the first author's observation \cite{Cag}*{\S3} that renormalization ideas (scaling and averaging) can be used to assign a finite (renormalized) value for the series $\sum_{n\geq 1} n$. The set-up 
there is using scaling differently and the use of probability is only partially justified. For example, for the scaling factor $m=2$, consider the partial sum
$U_{n}=\sum_{k=1}^{n}k$ and $Y_{n}=U_{n}-4U_{\left\lfloor \frac{n}%
{2}\right\rfloor }$ so that
\[
Y_{n}=\left\{
\begin{array}
[c]{ccc}%
-n/2 & \text{if} & n \text{ even}\\
\left(  n+1\right)  /2 & \text{if} & n  \text{ odd}%
\end{array}
\right.  .
\]
Heuristically, one can consider choosing $n\in\N$ at random. If the
probability of $n$ being odd/even is $1/2$, then the expectation of  $Y_{n}$ is 
$$
\e\left[  U_{n}-4U_{\left\lfloor \frac{n}{2}\right\rfloor }\right]   
=\frac{1}{2}\left(  -\frac{n}{2}\right)  +\frac{1}{2}\left(  \frac{n+1}%
{2}\right)  =\frac{1}{4}=(1-4)\left(-\frac{1}{12}\right).
$$
The value $-1/12=\zeta(-1)$ is then regarded as the renormalized value of the
divergent series  $\sum_{n\geq 1} n$.

Our main result, Theorem \ref{t2}, identifies a large class of weakly renormalizable series. These series depend on the choice of a function $f(s,t) : \Co\times \Re_+\to \Co$ with special properties (holomorphic in $s$, differentiable in $t$, such that for a certain range of $s$, $f(s,t)$ is in the image of the Laplace-Mellin transform). We refer to Definition \ref{def-H} for the precise details. For a fixed $s\in \Co$ and $t_0>0$ we consider the series with terms
$a_n=f(s-1,t_0+n-1)$, that is the Dirichlet-type series
\begin{equation}
\Di^f(1-s,t_0)=\sum_{n=0}^\infty f(s-1,t_0+n).
\end{equation}
This series is convergent for $s$ in some right half-plane and  it has analytic continuation to $s\in\Co_+$ with a possible pole at $s=0$. We still denote this analytic continuation by $\Di^f(1-s,t_0)$. Our main result shows that, if $s\neq 0$, this series is weakly renormalizable and 
\begin{equation}
 \e[X_\ba]=\Di^f(1-s,t_0)-\Di^{\Sc_m f}(1-s,\frac{t_0+m-1}{m}),
 \end{equation}
 where $(\Sc_m f) (s,t)=f(s,tm)$. Note the occurrence on the right-hand side of the analytic continuation of the Dirichlet series. It perhaps desirable to assign to the series $\displaystyle\sum_{n= 0}^\infty a_n$ the renormalized value $\Di^f(1-s,t_0)$ given by analytic continuation and, consequently, to the series $\displaystyle\sum_{n= 0}^\infty a_{mn}$ the renormalized value $\Di^{\Sc_m f}(1-s,\frac{t_0+m-1}{m})$. These terms cannot be distinguished canonically from $\e[X_\ba]$ without further assumptions on  properties of $\e[X_\ba]$ as a function of $m$. Nevertheless, Theorem \ref{t2} shows that, at least for this class of examples, the probabilistic renormalization and the analytic continuation are compatible. This class of examples has the remarkable property that $X_\ba(n)$ is constant for large $n$ and hence the expectation $\e[X_\ba]$ is given by this constant value and does not depend on the choice of the finitely-additive measure $\pr$. The proofs make use of the properties of the Bernoulli operators, recently introduced and studied in \cite{Ion}. 
 
 Our second main result identifies our only example of strongly renormalizable series in this class, which arises for $f(s,t)=t^s$ and $t_0=1$. As expected, this function exhibits scale invariance properties
 $$f(s,mt)=m^sf(s,t).$$ We show that, if $s\neq 1$, the series 
 \begin{equation}
 \sum_{n\geq 1} \frac{1}{n^s}
 \end{equation}
is strongly renormalizable, and its renormalized value is $\zeta(s)$. Therefore, the probabilistic renormalization value coincides with the value produced by the analytic continuation.

We would also like to briefly comment on the applicability of our renormalization framework to the zeta functions associated to fractal strings (see, e.g., \cite{LF}), whose study was developed mainly through the efforts of M. Lapidus and collaborators. In particular, the zeta function of a \emph{self-similar} fractal string has, by construction, strong scale invariance properties that suggest that such series would constitute a large class of examples of strongly renormalizable series. The partial sums of these series (when divergent) have exponential growth, and the scale of the interaction in \eqref{ps1} (which, in this formulation, is used to compare re-scaled versions of the same series when the partial sums have at most polynomial growth)  has to be appropriately modified. We hope that an appropriate probabilistic renormalization framework for series with exponential growth and the necessary techniques to analyze them would be developed in the future.

\begin{ack}
We thank the referee for bringing to our attention the work of M. Lapidus, M. van Frankenhuijsen, and collaborators on the zeta functions associated to fractal strings and higher dimensional analogues.
The work of BI was partially supported by the Simons Foundation grant 420882. 
\end{ack}

\section{Notation}

\subsection{}\label{gamma}  We denote by $\Z$ and $\N$ the set integers and the set of positive integers, respectively. For $a\in \Z$, we denote by $\Z_{\geq a}$ and $\Z_{>a}$ the set of integers that are weakly larger and, respectively, strictly larger than $a$. Throughout, we reserve $u$ and $t$ to denote real variables with domain $\Re_{+}=(0,\infty)$; accordingly, $du$ and $dt$ refer to the Lebesgue measure on $\Re_{+}$. All integrals with respect to these measures are Lebesgue integrals of real or complex-valued measurable functions. All spaces of functions that will be considered, in particular the domains of all operators, are based on functions on $\Re_+$. Similarly, we reserve $z$ and $s$ to denote complex variables with domain $\Co$, unless otherwise specified. We will make use of the Gamma function $\Gamma(s)$ and the falling and raising factorials $s^{\underline{n}}=\Gamma(s+1)/\Gamma(s-n+1)$, $n\in \Z$.

\subsection{}
For $s_0\in \Co$ we denote left open half space, the right open half-space, and the right closed half-space determined by the line $\Real(s)=\Real(s_0)$ by
 $$
 \Co_{s_0^-}=\{s\in \Co~|~\Real(s)<\Real(s_0)\}, \quad 
 \Co_{s_0^+}=\{s\in \Co~|~\Real(s)>\Real(s_0)\}, \quad \text{and}\quad 
 \Co_{s_0}=\overline{\Co_{s_0^+}}.
 $$


\subsection{} We consider the series 
$$
\Di^f(s,t)=\sum_{n=0}^\infty {f(-s,t+n)},
$$
associated to  a function $f(s,t)$ that is holomorphic in $s\in \Co$. We will use the notation $\Di^f(s)$ for $$\Di^f(s,1)=\sum_{n=1}^\infty {f(-s,n)}.$$


\subsection{} To facilitate the verification of certain identities, we will adopt the following notation. For $f$ a function on $\Re_+$ and $t\in \Re_+$ we denote
$$
\<f,t\>=f(t).
$$

We use $\id$ to denote the identity operator, 
$\del$ to denote the derivative operator $\frac{d}{dt}$, $\E$ to denote the (forward) shift operator 
\[
\<\E f, t\>=\<f, t+1\>=f(t+1)
\]
and $\Del$ to denote the discrete (forward) derivative operator $\Del=\E-\id$,
\[
(\Del f)(t)=f(t+1)-f(t).
\]

\subsection{}
Moreover, for any fixed  $h>0$, we consider the corresponding operators:  $\Sc_h$ the scaling operator defined as
\[
\<\Sc_h f, t\>=\<f, ht\>,
\]
$\E_h$ the (forward) shift operator defined by 
\[
\<\E_h f, t\>=\<f, t+h\>,
\] and the difference  operator $\Del_h=\E_h-\id$. 
Note that, for any $h>0$, we have 
\[\
\E_h=\Sc_{h^{-1}}\E \Sc_{h},\quad \text{and} \quad \Del_h=\Sc_{h^{-1}}\Del \Sc_{h}.\] Indeed,
$$
\<\Sc_{h^{-1}}\E \Sc_{h} f, t\>=\<\E \Sc_{h} f, h^{-1}t\>=\<\Sc_{h} f, h^{-1}t+1\>=\<f, h(h^{-1}t+1)\>=\<f, t+h\>.
$$
In particular, any property of $\Del$ which is invariant under scaling is inherited by $\Del_h$.

\subsection{} \label{LT}

The Laplace-Lebesgue transform (or simply the Laplace transform) $\L(\varphi)$  of $\varphi$ is defined by 
\begin{equation}\label{eq2c}
\L(\varphi)(s)=\int_0^\infty e^{-su}\varphi(u)du
\end{equation}
for $s\in \Co$ for which the integral converges. As its domain we will consider $\D(\L)$, the $\Co$-vector space of functions $\varphi$ which are integrable on intervals $(0,R)$ for every $R>0$, and for which the integral \eqref{eq2c} is  \emph{absolutely} convergent for all $s\in \Co_{0^+}$. We denote by $\Imag(\L)$ the image of $\L$ on this domain. Note that $\varphi\in \D(\L)$ implies that $\varphi$ is locally integrable on $(0,\infty)$. Although holomorphic in $\Co_{0^+}$, we will mostly consider $\L(\varphi)$ as a function $\L(\varphi)(t)$ with $t\in\Re_+$. For any $k\geq 0$ its $k$-th derivative is given by 
\begin{equation}\label{eq2b}
\L(\varphi)^{(k)}(s)=\int_0^\infty e^{-su}(-u)^k\varphi(u)du.
\end{equation}
Note that if the integral \eqref{eq2c} converges absolutely for $s_0$ then it converges uniformly and absolutely in the closed half-plane 
 $ \Co_{s_0}$.


\subsection{} We define the Laplace-Mellin transform of $\varphi(u)\in \D(\L)$ as the function
$$
\L(\varphi(u)u^{s-1}/\Gamma(s))(t)
$$
as a function of two arguments $(s,t)\in \Co_{1^+}\times \Re_+$.  In other words, the Laplace-Mellin transform of $\varphi(u)$ for fixed $t$ is the Mellin transform of $e^{-tu}\varphi(u)/\Gamma(s)$. The Mellin transform of $\varphi(u)/
\Gamma(s)$ would correspond to evaluation of the Laplace-Mellin transform at $t=0$. For  $f=\L(\varphi)\in \Imag(\L)$ and $s\in \Co_{1^+}$ 
we denote 
$$
f_{-s} (t)=\L(\varphi(u)u^{s-1}/\Gamma(s))(t).
$$
In particular, one has $f_{-1}(t)=f(t)$. 
\section{Probabilistic renormalization}

\subsection{} We will need to consider a  probability measure $\pr$ on $\N$ (on the full power set $\sigma$-field). Some interesting examples of arithmetic origin, including the zeta distribution (also known as the Zipf distribution), can be found in in \cite{GolPro}. The expectation of a (bounded) random variable $X$ with respect to $\pr$ is denoted by $\e[X]$.

While the concept of renormalization that we consider here can be defined for any probability measure, in order for the renormalization process to produce a result that is as much as possible reflective to the properties of the original series, it is desirable to choose a measure which is to a certain extent uniform. For this reason, we will consider $\pr$ to be a \emph{finitely additive}, translation invariant,  probability measure.  For us, an important property of such a $\pr$ is that it assigns measure zero to any finite set. The theory of integration with respect to a finitely additive measure is carefully developed in 
\cite{DS}*{Chapter III, \S 1-3}.

On $\N$,  the existence of finitely additive measures $\pr$ for which any subset is measurable and which satisfy certain criteria of uniformity (for example, for any $m$, the measure of an equivalence class modulo $m$ is $1/m$, or the stronger property of being translation invariant) has been established in \cites{KO, SK} (see, for example, \cite{SK}*{Theorem 4.11}). The main results of this article are independent of the choice of $\pr$, but in general, the concepts of weakly and strongly renormalizable series that we define below do depend on such a choice.

\subsection{}  Let $\ba=(a_n)_{n\geq 0}$ denote a sequence. Consider the following operators acting on sequences: $\id$ the identity operator, $\E$ the forward shift operator, 
$(\E \ba)_n=a_{n+1}$, and $\Del=\E-\id$ the (forward) discrete derivative. For $h>0$, let 
 \[\E^h=\sum_{n=0}^\infty \binom{h}{n}\Del^n.\]
 The domain of the operator $\E^h$ consists of sequences $\ba$ for which the series defining $\E^h\ba$ is pointwise convergent. The expansion in the definition of the operator $\E_h$ is the Newton binomial expansion for $(\id+\Delta)^h$. As such, $\E^h$ is a version of the $h$-shift operator; we denote $$a_{n+h}=(\E^{h}\ba)_n.$$ For $h\in\Z_{\geq 0}$ the series becomes a finite sum and produces the known value of $a_{n+h}$.
 
\subsection{}\label{sec: ext} Fix $m\in \N$ and, for $j\in \Z_{\geq 0}$ denote by $[j]=\{j+mk~|~k\geq 0\}$ its equivalence class modulo $m$ (as an equivalence relation on $\Z_{\geq 0}$). Fix $\Lambda_m=\{0,\dots,m-1\}$ and $j\in \Lambda_m$. Let $\bs$ be a sequence defined only on $[j]$. More precisely, the sequence consists of the terms
$$
\bs_j, \bs_{j+m}, \bs_{j+2m}, \bs_{j+3m},\dots
$$
It might be convenient to think of a sequence as a function on $\Z_{\geq 0}$. In this light, the function $\bs$ is only defined on $[j]\subset \Z_{\geq 0}$.

We will consider the (potential) extension of the function $\bs$ to $\Z_{\geq j}$ as follows. Let $\ts$ denote the sequence $$\ts_n=s_{nm+j}, \quad n\geq 0.$$ If $\ts$ belongs to the domain of $\E^{i/m}$ for all $1<i< m$ then we denote 
$$
\tilde{s}_{n+i/m}=(\E^{i/m} \ts)_n, \quad \text{for all} \quad n\in \Z_{\geq 0},~ i\in \Lambda_m.
$$
We can now define $$s_n=\tilde{s}_{(n-j)/m}  \quad \text{for all} \quad n\in\Z_{\geq j}.$$
Under these circumstances we say that $\bs$ has an extension to $\Z_{\geq j}$. We use the same symbol, $\bs$, to denote the extension.
 
\subsection{} To write an expression for the extension $\bs$ without invoking $\ts$ we consider the operators $\E_m$  and $\Del_m=\E_m-\id$. The operator $\E_m$ acts naturally on the original sequence $\bs$ as $\E_m s_{n}=s_{n+m}$, for any $n\in[j]$. For $h>0$, we denote
 \[\E_{m}^{h}=\sum_{n=0}^\infty \binom{h}{n}\Del_m^n.\]
If $\bs$ has an extension to $\Z_{\geq j}$, the extension can then be described as
$$
s_n=\E_{m}^{\{\frac{n-j}{m}\}}\left(\bs\right)_{m\lfloor\frac{n-j}{m}\rfloor+j},
$$
where $\{N\}$ and $\lfloor N\rfloor$ denote the fractional part and, respectively, the integer part of $N$.

\subsection{} Let $m\in \N$ and $j\in \Lambda_m$. For the sequence $\ba$ consider the sequence $\bs_{[j]}$ defined on $[j]$ as follows
$$
\bs_{[j],n}=\sum_{i=0}^{n} a_i -\sum_{i=0}^{(n-j)/m} a_{mi}.
$$
As in \S\ref{sec: ext} we will consider the possible extension of $\bs_{[j]}$ to $\Z_{\geq j}$. If this extension exists, then
$$
\bs_{[j],n}=\E_{m}^{\{\frac{n-j}{m}\}}\left(\sum_{i=0}^{m\lfloor\frac{n-j}{m}\rfloor+j} a_i -\sum_{i=0}^{\lfloor \frac{n-j}{m}\rfloor} a_{mi}\right).
$$

The underlying intuition (present in all flavors of renormalization) behind the definition below is that the \qq{divergent} part of the series can be canceled out by subtracting some  (possibly re-scaled) version of the same series, thus leaving behind some finite value that can be interpreted as the difference between the \qq{convergent parts} of the two series. If this phenomenon is observed systematically for all possible scaling factors then we say that the series is weakly renormalizable. 

Let $(\N,\mathscr{P}(\N),\pr)$ be a probability space with the following properties
\begin{itemize}
\item $\mathscr{P}(\N)$ is the $\sigma$-field of all subsets of $\N$;
\item $\pr$ is a \emph{finitely-additive} probability measure;
\item $\pr(A+n)=\pr(A)$, for all $A\subseteq \N$ and $n\in \N$.
\end{itemize}
It is important to remark that there are no countably-additive probability measures that satisfy the third condition. While the definition below can be extended to include general (countably-additive) probability measures with no assumption of uniformity, the expectation of a random variable $X$ with respect to such a measure would not accurately reflect the values of $X$ and the procedure would be akin to the more widely used regularization procedures for series (with the measure $\pr$ playing the role of the regulator). 

The most basic random variables that will occur in our setting are as follows. Whenever  a sequence $\bs_{[j]}$ extends to $\Z_{\geq j}$, we will consider a further arbitrary extension of $\bs_{[j]}$ as a function on $\N$ and regard $\bs_{[j]}$ as a random variable on $\N$.

\begin{Def}\label{wr}
With the notation above, we say that the series corresponding to $\ba$ is weakly renormalizable (with respect to $\pr$) if, for all $m\in \N$, we have
\begin{itemize}
\item $\bs_{[j]}$ have extension to $\Z_{\geq j}$ for all $j\in \Lambda_m$;
\item $\e[X_\ba]$ is finite, where $X_{\ba}=\frac{1}{m}\left( \bs_{[0]}+\cdots+\bs_{[m-1]} \right)$.
\end{itemize}
\end{Def}
Note that there is a random variable $X_\ba$ for each $m$ but we do not include the reference to $m$ in the notation as it will be clear from the context. In section \ref{main} we will construct a large class of examples of weakly renormalizable sequences.

\begin{Rem}
The function $X_\ba$ is regarded as a random variable on $\Z_{\geq m}$. We can extend $X_\ba$ in any way to a function on $\N$. Because $\pr$ assigns measure zero to any finite set the expectation of  $X_\ba$ does not depend on the extension.
\end{Rem}

\begin{Rem} 
For any fixed $m$, the function $X_\ba$  can be considered as a moving average of differences between the partial sums corresponding to the sequence $(a_n)_{n\geq 0}$ and those corresponding to the sequence $(a_{mn})_{n\geq 0}$. However, it is important to note that we do not take the average of the $m$ numerical values of the partial sums
$$
\bs_{[n-m+1],n-m+1},~\bs_{[n-m+2],n-m+2},\dots,\bs_{[n],n}
$$
but rather the values at $n$ of the extensions of these partial sums, specifically on
$$
\bs_{[n-m+1],n},~\bs_{[n-m+2],n},\dots,\bs_{[n],n}.
$$
This subtlety is crucial because in practice the difference between the numerical values of the partial sums and those of their extension are enough to produce fluctuations in the values of $X_\ba$ that significantly alter its expectation.
\end{Rem}

\begin{Rem}
If the series associated to $\ba$ is weakly renormalizable then one would like to assign to $\sum_{n\geq 0} a_n$ and $\sum_{n\geq 0} a_{mn}$ some finite values (their renormalized values) $S$ and $S_m$ such that for any $m\in \N$ we have
$$
\e[X_\ba]=S-S_m.
$$
In general, there is no canonical way of assigning these values, and this is consistent with the general intuition that the \qq{divergent} part of a series can be thought of only up to a constant. If some canonical renormalized value can be assigned for one the series, then all the series acquire a canonical renormalized value. Thus, a divergent weakly renormalizable series is a series that is divergent not because of some loss of scale. If the partial terms of series have some \emph{scale invariance} properties, allowing us to argue that $S_m$ must equal some re-scaled version of $S$ then we can assign a renormalized value to $\sum_{n\geq 0} a_n$. This is the subject of Definition \ref{sr}.
\end{Rem}

\subsection{} Without pursuing this in full generality, let us point out that for \emph{convergent} series we expect that $\e[X_\ba]=S-S_m$, with $S$ and $S_m$ the actual values of $\sum_{n\geq 0} a_n$ and $\sum_{n\geq 0} a_{mn}$. We prove this under some technical assumption.

\begin{Prop}\label{prop-conv}
Let $\ba$ be a sequence such that the corresponding series is absolutely convergent and weakly renormalizable. If, for $m\in \N$ and any $j\in \Lambda_m$, $0<i<m$ the series $\E^{i/m}\bs_{[j]}$ is uniformly convergent, then 
$$
\e[X_\ba]=L-L_m,
$$
where $\displaystyle L=\sum_{n=0}^\infty a_n$ and $\displaystyle L_m=\sum_{n=0}^\infty a_{mn}$.
\end{Prop}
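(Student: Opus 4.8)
The plan is to show that $X_\ba(n) \to L - L_m$ as $n \to \infty$ and then to transfer this pointwise limit to the expectation using the finitely-additive, translation-invariant structure of $\pr$. First I would record the elementary consequences of the hypotheses. Since $\sum_n a_n$ converges absolutely to $L$, the subseries $\sum_n a_{mn}$ also converges (it is a subseries of an absolutely convergent series), say to $L_m$. Consequently, on its original domain $[j]$ the sequence
$$
\bs_{[j],n}=\sum_{i=0}^{n} a_i -\sum_{i=0}^{(n-j)/m} a_{mi}
$$
tends to $L-L_m$ as $n\to\infty$ through $[j]$, because both partial-sum sequences converge.

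The core of the argument is to check that passing to the extension of $\bs_{[j]}$ preserves this limit. Writing $\ts_q=\bs_{[j],qm+j}$, so that $\ts_q\to \ell:=L-L_m$, I would first note that $\Del^k\ts_q\to 0$ for every $k\geq 1$ (the difference of a convergent sequence tends to $0$, and one iterates by induction on $k$). The extended values are, for $0<i<m$, the terms of $\E^{i/m}\ts=\sum_{k\geq 0}\binom{i/m}{k}\Del^k\ts$, and the hypothesis that this Newton series converges \emph{uniformly} in $q$ is exactly what licenses the interchange of the limit $q\to\infty$ with the infinite sum (the Moore--Osgood double-limit theorem): the $k=0$ term contributes $\ell$ and each term with $k\geq 1$ contributes $0$, so $(\E^{i/m}\ts)_q\to \ell$. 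Since $\Lambda_m$ is finite and along each residue class the extended values converge to $\ell$, I conclude that the full extended sequence satisfies $\bs_{[j],n}\to \ell$ as $n\to\infty$, for every $j$. Averaging over $j\in\Lambda_m$ then gives $X_\ba(n)\to \frac{1}{m}\cdot m\,\ell = L-L_m$.

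Finally I would transfer this to the expectation. The sequence $X_\ba$ is bounded, since each $\bs_{[j]}$ is convergent (hence bounded) on $\Z_{\geq j}$ and the finite initial segment may be taken bounded. For any $\eps>0$ the set $\{n:|X_\ba(n)-(L-L_m)|\geq \eps\}$ is finite, hence has $\pr$-measure zero; using $\e[c]=c$ for constants $c$ (which holds because $\pr(\N)=1$) together with the bound $|\e[Y]|\leq \e[|Y|]$ for the finitely-additive integral of a bounded function, one gets $|\e[X_\ba]-(L-L_m)|\leq \eps$ for every $\eps>0$, whence $\e[X_\ba]=L-L_m$.

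I expect the main obstacle to be the interchange step in the second paragraph: carefully justifying $\lim_q (\E^{i/m}\ts)_q=\sum_k \binom{i/m}{k}\lim_q \Del^k\ts_q$ and confirming that the uniform-convergence assumption in the statement is precisely the double-limit condition that this requires. The remaining inputs (boundedness, vanishing on finite sets, and linearity of the finitely-additive integral) are routine given the properties of $\pr$ established in the setup.
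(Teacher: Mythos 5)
Your proposal is correct and follows essentially the same route as the paper's proof: establish that each extended $\bs_{[j]}$ (hence $X_\ba$) tends to $L-L_m$ at infinity, using the uniform convergence of the Newton series to interchange the limit with the sum (noting $\Del^k\bs_{[j]}\to 0$ for $k\geq 1$), and then conclude via the observation that a random variable with a limit at infinity has that limit as its expectation, since $\pr$ assigns measure zero to finite sets. Your write-up merely spells out the interchange-of-limits step and the boundedness of $X_\ba$ in more detail than the paper does.
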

\begin{proof}
Let us remark first that if $X:\Z_{\geq 0}\to \Co$ has limit $A$ at infinity, then $\e[X]=A$. Indeed, by linearity, it is enough to show this for the case $A=0$. For any $\eps>0$, $|X(n)|<\eps$ for any $n$ outside a finite set. Since the measure of any finite set is $0$ we obtain that $|\e[X]|<\eps$. Therefore, $\e[X]=0$.

Our claim now follows from the fact that $X_\ba$ has limit $L-L_m$ at infinity, or rather, each extension $\bs_{[j]}$ has limit $L-L_m$ at infinity. This is definitely true about the partial sums $\bs_{[j]}$ and also about their extensions because the uniform convergence of $\E^{i/m}\bs_{[j]}$ implies that the limit at infinity of the series $\E^{i/m}\bs_{[j]}$ is the series of the limit. Since all sequences $\Del^n \bs_{[j]}$, $n\geq 1$, have limit $0$ at infinity, we obtain the desired result.
\end{proof}

\begin{Rem}
On the other hand, the failure to be renormalizable points to some loss of scale. For example, let us consider the case of the Grandi series, which is summable by other methods (e.g. Ces\`aro summation). Let, $\ba=(a_n)_{n\geq 0}$ be defined by $a_0=0$ and $a_n=(-1)^{n+1}$, $n\geq 1$. A straightforward computation gives the following:
$X_\ba(n)=1-1/m(n+1/2)$, if $m$ is even and $X_\ba(n)=1/2-1/(2m)$, if $m$ is odd. This shows that $\e[X_\ba]$ is finite when $m$ is odd, but not when $m$ is even. Therefore, the Grandi series is not weakly renormalizable. Ultimately, this happens because $\sum_{n\geq 0}a_n$ and $\sum_{n\geq 0}a_{mn}$ for even $m$ are divergent for different reasons. If our definition of renormalization would be based only on odd scaling factors $m$, we would find that 
$$
\e[X_\ba]=(1-m^{-1})\frac{1}{2}.
$$
Using the terminology of Definition \ref{sr} below, the series would be strongly renormalizable with renormalized value $1/2$ and critical exponent $-1$.
\end{Rem}

\subsection{} There is, however, a particular situation in which we can assign a canonical value to a weakly renormalizable series.  
\begin{Def}\label{sr}
Let $\ba$ be a sequence such that  the corresponding series is weakly renormalizable. We say that the series corresponding to $\ba$ is (strongly) renormalizable (with respect to $\pr$) if there exists $s,S\in \Co$ such that 
$$
\e[X_\ba]=S(1-m^s)\quad \text{for all } m\geq 1.
$$
The value $S$  will be denoted by $\Sigma a_n$ and will be called the renormalized value of $\displaystyle \sum_{n\geq 0}a_n$. The value $s$ will be called the critical (scaling) exponent of $\displaystyle \sum_{n\geq 0}a_n$.
\end{Def}

As already mentioned, and also in the light of Proposition \ref{prop-conv}, this is only expected if the sequence $\ba$ or the associated series exhibits some form of scale invariance. This is consistent with other flavors of renormalization, especially in the context of physical applications \cites{CFP, Coll}.

Our main results are concerned with a class of examples of weakly and strongly renormalizable series that depend analytically on one complex parameter and the relationship between the renormalized values and those arising from the analytical continuation of actual limits of the series from the domain of convergence to the maximal domain of holomorphy (in our case, the complex plane).


\section{The Bernoulli operator}  \label{BO}
\subsection{}\label{series-ad}

We will denote by  $\A$ the operator
$$
\A=\sum_{n=0}^\infty \E^{n}.
$$
This operator can be regarded as the discrete integral (i.e. summation) operator  associated to the counting measure on $\Z_{\geq 0}$.   We consider $\A$ as an operator with domain the vector space $\D(\A)$  of  $\Co$-valued functions $f(t)$ on $\Re_+$  for which the series
\begin{equation}\label{Aseries-def}
(\A f)(t)=\sum_{n=0}^\infty (\E^{n}f)(t)=\sum_{n=0}^\infty f(t+n)
\end{equation}
converges absolutely and locally uniformly in $t$.  In general, we do not expect that $\A$ is continuous with respect to any natural topology on its domain, but $\A$ will preserve the local integrability and continuity of the argument.
We use the corresponding definition and notation for $\A_h$, $h>0$, and its domain. More precisely,  $\A_h$ is the operator with domain the vector space $\D(\A_h)$  of  $\Co$-valued functions $f(t)$ on $\Re_+$  for which the series
\[
(\A_h f)(t)=\sum_{n=0}^\infty (\E_h^{n}f)(t)=\sum_{n=0}^\infty f(t+nh)
\]
converges absolutely and locally uniformly in $t$.

\subsection{}
To motivate the consideration of the operator $\A$ let us remark that $(\A f)(t)$ can be regarded as a discrete anti-derivative of $-f(t)$. Take, for example,  $f\in L^1(\Re_+)$ and denote 
\begin{equation}\label{eq1}
F(t)=(\A f)(t).
\end{equation}
This is a function that is a.e. finite and  in $L^1_{\rm \ell oc}(\Re_+)$. Indeed,
\[
\sum_{n=0}^\infty \int_0^1 |f(t+n)|dx=\int_0^\infty |f(t)|dt<\infty,
\]
and, by the Fubini-Tonelli theorem, 
\[
\int_0^1 |F(t)|dt \leq \int_0^1 \sum_{n=0}^\infty  |f(t+n)|dt<\infty.
\]
Furthermore, a.e. we have
\[
\Delta F(t)=F(t+1)-F(t)= -f(t).
\]
In other words, $F$ is a discrete anti-derivative of $-f$. Formally, we would like to write  $F=-\frac{1}{\Delta}f$, or $F=\frac{1}{\id-\E}f$, the latter being also consistent with  the usual formal expansion 
\[
\frac{1}{\id-\E}=\sum_{n=0}^\infty \E^n.
\]
The convergence of the series \eqref{eq1} depends only on the  behavior of the function in a neighborhood of $+\infty$, so the hypothesis that $f\in L^1(\Re_+)$ can be replaced by  $f\in L^1_\infty(\Re_+)$.

\subsection{} The following basic result is straightforward (see, e.g. \cite{Ion}*{Propposition 4.3}).

\begin{Prop}\label{p2}
Let $f=\L(\varphi)\in \Imag(\L)$ and $\varphi(u)=o(u^{\gamma+1})$, $u\to 0^+$, for some $\gamma\in \Co_{-1^+}$. Then $f\in \D(\A)$ and 
$$
(\A f)(t)=\L\left(\frac{\varphi(u)}{1-e^{-u}}\right).
$$
\end{Prop}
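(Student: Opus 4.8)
The plan is to substitute $f = \L(\varphi)$ into the definition of $(\A f)(t) = \sum_{n=0}^\infty f(t+n)$ and interchange the summation with the Laplace integral. Writing out the terms,
\[
(\A f)(t) = \sum_{n=0}^\infty \int_0^\infty e^{-(t+n)u}\varphi(u)\,du
= \int_0^\infty e^{-tu}\varphi(u)\sum_{n=0}^\infty e^{-nu}\,du,
\]
where the inner geometric series sums to $1/(1-e^{-u})$ for $u>0$. This formally gives
\[
(\A f)(t) = \int_0^\infty e^{-tu}\frac{\varphi(u)}{1-e^{-u}}\,du = \L\!\left(\frac{\varphi(u)}{1-e^{-u}}\right)(t),
\]
which is exactly the claimed identity. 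So the entire content of the proof is justifying the interchange of sum and integral (Fubini--Tonelli), and confirming that the new integrand $\varphi(u)/(1-e^{-u})$ lies in $\D(\L)$ so that the right-hand Laplace transform is well-defined and that $f \in \D(\A)$ in the sense required (absolute, locally uniform convergence in $t$).

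The two hypotheses on $\varphi$ are precisely what control the two ends of the integral. Near $u \to \infty$, the factor $1/(1-e^{-u}) \to 1$ boundedly, so dividing by $1-e^{-u}$ does not worsen integrability there; absolute convergence of $\L(\varphi)$ on $\Co_{0^+}$ (built into $\varphi \in \D(\L)$) therefore transfers to the new integrand. Near $u \to 0^+$, one has $1-e^{-u} \sim u$, so $1/(1-e^{-u}) \sim 1/u$ introduces a simple-pole-type singularity; the hypothesis $\varphi(u) = o(u^{\gamma+1})$ with $\gamma \in \Co_{-1^+}$, i.e. $\Real(\gamma) > -1$, ensures that $\varphi(u)/(1-e^{-u}) = o(u^{\gamma})$ is integrable at the origin since $\Real(\gamma) > -1$. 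Hence $\varphi(u)/(1-e^{-u}) \in \D(\L)$, which is what makes the right-hand side meaningful.

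I would carry out the steps in this order: first verify $\varphi(u)/(1-e^{-u}) \in \D(\L)$ via the two asymptotic estimates above (small-$u$ using the $o(u^{\gamma+1})$ bound, large-$u$ using boundedness of the correction factor together with $\varphi \in \D(\L)$); second, apply Tonelli to the nonnegative integrand $e^{-tu}|\varphi(u)| e^{-nu}$ to get $\sum_n \int_0^\infty e^{-(t+n)u}|\varphi(u)|\,du = \int_0^\infty e^{-tu}|\varphi(u)|/(1-e^{-u})\,du < \infty$ for each $t > 0$, which simultaneously establishes absolute convergence of the defining series for $\A f$ and licenses the Fubini interchange; third, read off the stated identity. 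For the local uniformity in $t$ demanded by the definition of $\D(\A)$, I would note that on any compact $t$-interval $[t_0, t_1] \subset \Re_+$ the tail bound $\sum_{n \geq N} \int_0^\infty e^{-(t+n)u}|\varphi(u)|\,du \leq \sum_{n \geq N}\int_0^\infty e^{-(t_0+n)u}|\varphi(u)|\,du$ is uniform, giving local uniform convergence.

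The main obstacle is the small-$u$ analysis: one must check carefully that the growth condition $\Real(\gamma) > -1$ is exactly strong enough to beat the $1/u$ singularity from $1/(1-e^{-u})$ at the origin, so that $\varphi(u)/(1-e^{-u})$ remains locally integrable near $0$ and hence genuinely lies in $\D(\L)$. Everything else is a routine Fubini--Tonelli application, which is why the result is described as straightforward; the only place the hypotheses do real work is in controlling the behavior of the new integrand at the endpoints of integration.
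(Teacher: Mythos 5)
Your argument is correct and is exactly the standard Fubini--Tonelli computation that the paper has in mind: it gives no proof of its own, merely labelling the result ``straightforward'' and citing \cite{Ion}*{Proposition 4.3}. Your treatment of the two endpoints --- the $o(u^{\gamma+1})$ hypothesis cancelling the $1/u$ singularity of $1/(1-e^{-u})$ at $0$ and the boundedness of that factor at infinity --- together with the tail estimate for local uniform convergence supplies all the justification the statement requires.
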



\subsection{} We denote by $\B$ the (infinite order) difference operator 
$$
\B=\frac{\ln(\id+\Del)}{\Del}=\sum_{n=0}^\infty \frac{(-\Del)^n}{n+1},
$$
which we call the Bernoulli operator associated to the standard geometric series $\displaystyle\sum_{n\geq 0} z^n$, or simply the Bernoulli operator. We refer to \cite{Ion}*{\S6} for details on the definition and properties of similar operators associated to other series. We consider $\B$ as an operator with domain the vector space $\D(\B)$  of  $\Co$-valued functions $f(t)$ on $\Re_+$  for which the series
\begin{equation}\label{series-def}
\sum_{n=0}^\infty \frac{(-\Del)^nf(t)}{n+1}
\end{equation}
converges absolutely and locally uniformly in $t$. In particular, on this domain, $\B$ preserves the local integrability and continuity of the argument. 

We use the corresponding definition and notation for $\B_h$, $h>0$, and its domain. Note that if $f(t)\in \D(\B_h)$ then $\E_h f(t)\in \D(\B_h)$ and, in particular, the series 
\[
\ln(\id+\Del_h)f(t):=-\sum_{n=0}^{\infty}\frac{(-\Del_h)^{n+1}}{n+1}f(t)
\]
converges locally uniformly.


\subsection{} For any fixed  $h>0$, we define
\[\E^h=\sum_{n=0}^\infty \binom{h}{n}\Del^n.\]
We consider $\E^h$ as an operator with domain the vector space $\D(\E^h)$  of  $\Co$-valued functions $f(t)$ on $\Re_+$  for which the series
\begin{equation}
\sum_{n=0}^\infty \binom{h}{n}\Del^nf(t)
\end{equation}
converges absolutely and locally uniformly in $t$. In particular, on this domain, $\E^h$ preserves the local integrability and continuity of the argument. 

The operator $\E^h$ is itself a Bernoulli operator, namely the operator associated with the function $$\alpha(z)=z^h.$$ The function $\alpha(z)$ has a branch point singularity at $z=0$ and is holomorphic in the complex plane with a branch cut along the negative real axis (if $h\not\in \Z_{\geq 0}$). It has the power series expansion 
\[
\sum_{n=0}^\infty \binom{h}{n}(z-1)^n
\]
around $z=1$ with radius of convergence $1$. The construction and properties of Bernoulli operators in \cite{Ion} have been developed for functions $\alpha(z)$ holomorphic in the unit disk with a multi-power series expansion around $z=1$. Nevertheless, all the results in \cite{Ion} hold verbatim for functions with a possible branch point singularity at $z=0$, in particular for $\alpha(z)=z^h$.

\subsection{} For the following results we refer to \cite{Ion}*{Proposition 6.21}.

\begin{Prop}\label{p1}
Let $f=\L(\varphi)\in \Imag(\L)$, $h>0$. Then, 
\begin{enumerate}[label={\roman*)}]
\item $f\in \D(\B_h)$;
\item $\displaystyle\B_h f=\L\left(\frac{hu}{1-e^{-hu}}\varphi(u)\right)$;
\item $\ln(\id+\Del_h)f=h\del f=\L(-hu\varphi(u))$.
\end{enumerate}
In particular, $\B$ can be considered as a linear operator
$
\B: \Imag(\L) \to \Imag(\L).
$
\end{Prop}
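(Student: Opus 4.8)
The plan is to transport the whole computation to the Laplace side, where each of the operators $\E_h$, $\Del_h$, and $\B_h$ becomes multiplication by an explicit function of $u$, and then to justify the interchange of the defining series with the Laplace integral. The starting observation is that for $f=\L(\varphi)$ one has, for every $t>0$,
\[
(\Del_h f)(t)=f(t+h)-f(t)=\int_0^\infty e^{-tu}\bigl(e^{-hu}-1\bigr)\varphi(u)\,du,
\]
so $\Del_h$ corresponds to multiplication of $\varphi$ by $e^{-hu}-1$. Since $|e^{-hu}-1|\le 1$ for $u>0$, this multiplier preserves $\D(\L)$, and by iteration $(-\Del_h)^n f=\L\bigl((1-e^{-hu})^n\varphi\bigr)$ for every $n\ge 0$; dividing by $n+1$ gives $\frac{(-\Del_h)^n f}{n+1}=\L\bigl(\frac{(1-e^{-hu})^n}{n+1}\varphi\bigr)$. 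The entire argument then reduces to summing these over $n$ and exchanging summation with integration.

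For part (i) I would prove absolute and locally uniform convergence by a Tonelli estimate. Fixing a compact set $t\ge t_0>0$ and using $e^{-tu}\le e^{-t_0 u}$ together with $0<1-e^{-hu}<1$, the nonnegative double sum collapses to
\[
\sum_{n=0}^\infty \Bigl|\tfrac{(-\Del_h)^n f(t)}{n+1}\Bigr|
\le \int_0^\infty e^{-t_0 u}\Bigl(\sum_{n=0}^\infty \tfrac{(1-e^{-hu})^n}{n+1}\Bigr)|\varphi(u)|\,du
=\int_0^\infty e^{-t_0 u}\,\tfrac{hu}{1-e^{-hu}}\,|\varphi(u)|\,du,
\]
via the elementary identity $\sum_{n\ge 0}x^n/(n+1)=-\ln(1-x)/x$ at $x=1-e^{-hu}$. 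The key point, and the main obstacle, is the finiteness of this last integral, which hinges on controlling the Bernoulli multiplier $\tfrac{hu}{1-e^{-hu}}$: it tends to $1$ as $u\to 0^+$ and grows only linearly as $u\to\infty$, hence is $O(1+u)$. Multiplication by $1+u$ does not change the abscissa of absolute convergence of the Laplace integral — this is exactly the statement that $u\varphi(u)$ is again Laplace–integrable on $\Co_{0^+}$, cf. \eqref{eq2b} — so the integral is finite for every $t_0>0$. Being uniform in $t\ge t_0$, this bound simultaneously proves $f\in\D(\B_h)$ and that $\tfrac{hu}{1-e^{-hu}}\varphi(u)\in\D(\L)$ (the multiplier is also bounded near $0$, so local integrability on $(0,R)$ is unaffected).

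With the dominating integral finite, Fubini's theorem justifies the interchange in part (ii):
\[
\B_h f(t)=\sum_{n=0}^\infty \int_0^\infty e^{-tu}\tfrac{(1-e^{-hu})^n}{n+1}\varphi(u)\,du
=\int_0^\infty e^{-tu}\,\tfrac{hu}{1-e^{-hu}}\,\varphi(u)\,du
=\L\Bigl(\tfrac{hu}{1-e^{-hu}}\varphi(u)\Bigr)(t),
\]
evaluating the inner series again by $-\ln(1-x)/x$ at $x=1-e^{-hu}$. Part (iii) follows from the same template applied to $\ln(\id+\Del_h)f=-\sum_{n\ge 0}\frac{(-\Del_h)^{n+1}}{n+1}f$: here the inner series is $\sum_{n\ge 0}\frac{(1-e^{-hu})^{n+1}}{n+1}=-\ln(e^{-hu})=hu$, giving $\ln(\id+\Del_h)f=\L(-hu\,\varphi)$, which matches $h\del f=h\,\L(-u\varphi)$ by the derivative formula \eqref{eq2b}. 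Finally, since part (ii) exhibits $\B_h f$ as the Laplace transform of an element of $\D(\L)$, specializing to $h=1$ shows that $\B$ maps $\Imag(\L)$ into itself, and linearity is immediate. As a sanity check, the scaling relations $\Del_h=\Sc_{h^{-1}}\Del\Sc_h$ and $\B_h=\Sc_{h^{-1}}\B\Sc_h$ allow one to reduce everything to $h=1$, but carrying $h$ through the estimate above is just as direct.
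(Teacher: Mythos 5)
Your argument is correct and complete: the reduction of $\Del_h$, $\B_h$, and $\ln(\id+\Del_h)$ to multiplication by $e^{-hu}-1$, $hu/(1-e^{-hu})$, and $-hu$ on the Laplace side, the Tonelli/Weierstrass bound using $hu/(1-e^{-hu})=O(1+u)$ (which preserves the abscissa of absolute convergence, as in \eqref{eq2b}), and the Fubini interchange are exactly what is needed. Note that the paper itself does not prove this proposition but cites \cite{Ion}*{Proposition 6.21}; your proof is the natural self-contained version of that argument, so there is nothing further to reconcile.
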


\begin{Prop}\label{p5}
Let $f=\L(\varphi)\in \Imag(\L)$, $h>0$. Then, 
\begin{enumerate}[label={\roman*)}]
\item $f\in \D(\E^h)$;
\item $\displaystyle\E^h f=\L\left(e^{-hu}\varphi(u)\right)=\E_h f$.
\end{enumerate}
In particular, $\E^h=\E_h$ on $\Imag(\L)$.
\end{Prop}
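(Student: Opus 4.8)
The plan is to push the entire computation down to the level of the Laplace density $\varphi$, where the difference operator $\Del$ becomes multiplication by $e^{-u}-1$ and the ordinary binomial identity $\big(1+(e^{-u}-1)\big)^h=e^{-hu}$ does the essential work. First I would record the elementary shift identity: since $\L(\varphi)(t+1)=\L(e^{-u}\varphi)(t)$, linearity gives $\Del\,\L(\varphi)=\L\big((e^{-u}-1)\varphi\big)$ and, iterating, $\Del^{n}\L(\varphi)=\L\big((e^{-u}-1)^{n}\varphi\big)$ for every $n\geq 0$ (each factor $e^{-u}-1$ is bounded, so $\D(\L)$ and $\Imag(\L)$ are preserved). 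Applying $\E^{h}=\sum_{n}\binom{h}{n}\Del^{n}$ termwise and interchanging the sum with the integral would then yield
\[
\E^{h}\L(\varphi)(t)=\int_{0}^{\infty}e^{-tu}\Big(\sum_{n=0}^{\infty}\tbinom{h}{n}(e^{-u}-1)^{n}\Big)\varphi(u)\,du=\int_{0}^{\infty}e^{-tu}e^{-hu}\varphi(u)\,du=\L(e^{-hu}\varphi)(t),
\]
where the inner sum equals $(e^{-u})^{h}=e^{-hu}$ because $e^{-u}-1\in(-1,0)$ for every $u>0$. Recognizing $\L(e^{-hu}\varphi)(t)=\L(\varphi)(t+h)=\E_{h}\L(\varphi)(t)$ then delivers part (ii) and the identity $\E^{h}=\E_{h}$ on $\Imag(\L)$ at once.

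The substance lies in justifying the convergence and the interchange rigorously, which simultaneously proves (i). The key estimate is that for $h>0$ the \emph{absolute} binomial series converges at the endpoint: using the upper-negation identity $\binom{h}{n}=(-1)^{n}\binom{n-h-1}{n}$ together with $\Gamma(n-h)/\Gamma(n+1)\sim n^{-h-1}$, one obtains $\big|\binom{h}{n}\big|\sim n^{-h-1}/|\Gamma(-h)|$, so that $C_{h}:=\sum_{n\geq 0}\big|\binom{h}{n}\big|<\infty$ exactly because $h>0$. Consequently, for $u>0$ the nonnegative series $g(u):=\sum_{n}\big|\binom{h}{n}\big|(1-e^{-u})^{n}$ is bounded by $C_{h}$ uniformly in $u$, since $1-e^{-u}=|e^{-u}-1|\leq 1$. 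A Tonelli argument then controls the double series:
\[
\sum_{n=0}^{\infty}\Big|\tbinom{h}{n}\,\Del^{n}\L(\varphi)(t)\Big|\;\leq\;\int_{0}^{\infty}e^{-tu}|\varphi(u)|\,g(u)\,du\;\leq\;C_{h}\int_{0}^{\infty}e^{-tu}|\varphi(u)|\,du,
\]
which is finite for $t>0$ because $\varphi\in\D(\L)$ makes the last integral absolutely convergent on $\Co_{0^{+}}$. This gives absolute convergence; replacing $e^{-tu}$ by $e^{-au}$ on a compact interval $t\in[a,b]\subset\Re_{+}$ and using that the tail $\sum_{n\geq N}\big|\binom{h}{n}\big|$ is uniformly small furnishes local uniformity, so $f\in\D(\E^{h})$, which is (i). The very same bound is the Fubini hypothesis needed to interchange summation and integration in the first display, closing the argument for (ii).

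I expect the only real obstacle to be the endpoint summability $\sum_{n}\big|\binom{h}{n}\big|<\infty$ for $h>0$: this is precisely where the hypothesis $h>0$ enters decisively (the series diverges for non-integer $h<0$), and it is what upgrades the merely conditional binomial identity at the argument $1-e^{-u}\to 1$ into an absolutely convergent, legitimately interchangeable one. Everything else — the shift identity, the Tonelli/Fubini step, and the local-uniformity estimate — is routine once this bound is in hand, and the whole scheme runs in close parallel to the proof of Proposition \ref{p1} for $\B_{h}$.
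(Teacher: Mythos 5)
Your proof is correct, and it is essentially the argument the paper relies on: the paper itself gives no proof of Proposition \ref{p5}, deferring entirely to \cite{Ion}*{Proposition 6.21}, where the same mechanism (pushing $\Del^n$ down to multiplication by $(e^{-u}-1)^n$ on the Laplace density, using the endpoint absolute summability $\sum_n\bigl|\binom{h}{n}\bigr|<\infty$ for $h>0$, and interchanging sum and integral by Tonelli--Fubini) is carried out for general Bernoulli operators $\alpha(\id+\Del)$ and specializes to $\alpha(z)=z^h$. Your write-up correctly isolates the one non-routine point — the $O(n^{-h-1})$ decay of the binomial coefficients, which is exactly where $h>0$ is used — and the local-uniformity estimate on compacta matches what the definition of $\D(\E^h)$ requires.
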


\begin{Cor} \label{cor1}
Let $f\in \Imag(\L)$, $h, h^\prime>0$, $m\in \N$. Then, 
\begin{enumerate}[label={\roman*)}]
\item $\B_{h^{-1}}\Sc_h f= \Sc_h\B f$;
\item $\B_h\E_{h^\prime}=\E_{h^\prime}\B_h$;
\item $\displaystyle\B f=\frac{1}{m}\sum_{i=0}^{m-1}\B_m \E_{i}f$.
\end{enumerate}
\end{Cor}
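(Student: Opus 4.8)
The plan is to prove all three identities by passing to the Laplace side, where Propositions \ref{p1} and \ref{p5} convert each operator into multiplication of the density $\varphi$ by an explicit factor. Write $f=\L(\varphi)$ with $\varphi\in\D(\L)$. Two facts drive everything: by Proposition \ref{p5}, $\E_h f=\L(e^{-hu}\varphi(u))$, so a shift becomes multiplication by $e^{-hu}$; and by Proposition \ref{p1}(ii), $\B_h f=\L\left(\frac{hu}{1-e^{-hu}}\varphi(u)\right)$, so the Bernoulli operator becomes multiplication by $\frac{hu}{1-e^{-hu}}$. In particular both $\B_h$ and $\E_{h'}$ act as multiplication operators on densities, and every intermediate function stays in $\Imag(\L)$, so the propositions reapply without any convergence issue; this is what makes the corollary essentially a bookkeeping exercise rather than an analytic one.

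For part i) I would first record how $\Sc_h$ acts on the Laplace side: substituting $v=hu$ in $(\Sc_h f)(t)=f(ht)=\int_0^\infty e^{-htu}\varphi(u)\,du$ gives $\Sc_h f=\L\left(\tfrac1h\varphi(u/h)\right)$, so scaling the argument by $h$ corresponds to scaling the density argument by $h^{-1}$. Applying this together with Proposition \ref{p1}(ii) to both sides, the factor produced by $\B_{h^{-1}}$, namely $\frac{h^{-1}u}{1-e^{-h^{-1}u}}$, matches the factor $\frac{u}{1-e^{-u}}$ of $\B$ after the argument rescaling inside $\Sc_h$, and the two densities coincide. Equivalently, and perhaps more transparently, I would use the conjugation identity $\Del_{h^{-1}}=\Sc_h\Del\Sc_{h^{-1}}$ (the $h\mapsto h^{-1}$ case of $\Del_h=\Sc_{h^{-1}}\Del\Sc_h$): raising to powers and feeding into the defining series of $\B$ yields $\B_{h^{-1}}=\Sc_h\B\Sc_{h^{-1}}$, whence $\B_{h^{-1}}\Sc_h=\Sc_h\B$. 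Part ii) is then immediate on the Laplace side, since $\B_h$ and $\E_{h'}$ are multiplication operators (by $\frac{hu}{1-e^{-hu}}$ and by $e^{-h'u}$), and multiplication is commutative, so both $\B_h\E_{h'}f$ and $\E_{h'}\B_h f$ equal $\L\left(e^{-h'u}\frac{hu}{1-e^{-hu}}\varphi(u)\right)$; operator-theoretically this is just that the shift $\E_{h'}$ commutes with $\Del_h=\E_h-\id$, hence with every power series in $\Del_h$.

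Part iii) carries the only real content, and I expect the key step to be the telescoping geometric sum. Applying Propositions \ref{p5} and \ref{p1}(ii), each summand becomes $\B_m\E_i f=\L\left(\frac{mu}{1-e^{-mu}}e^{-iu}\varphi(u)\right)$, so the right-hand side equals $\L\left(\frac{u}{1-e^{-mu}}\left(\sum_{i=0}^{m-1}e^{-iu}\right)\varphi(u)\right)$. The finite geometric series sums to $\sum_{i=0}^{m-1}e^{-iu}=\frac{1-e^{-mu}}{1-e^{-u}}$, the factor $1-e^{-mu}$ cancels, and the surviving density $\frac{u}{1-e^{-u}}\varphi(u)$ is exactly $\B f$ by Proposition \ref{p1}(ii) with $h=1$. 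The operator-level shadow of this computation is the formal identity $\frac1m\B_m\sum_{i=0}^{m-1}\E^i=\frac{1}{m}\cdot\frac{m\ln(\id+\Del)}{(\id+\Del)^m-\id}\cdot\frac{(\id+\Del)^m-\id}{\Del}=\frac{\ln(\id+\Del)}{\Del}=\B$, using $\id+\Del_m=(\id+\Del)^m$; this confirms the cancellation, but the clean analytic justification is via the densities. The only (mild) obstacle throughout is to check that applying Proposition \ref{p1} after Proposition \ref{p5} is legitimate, which it is because $e^{-iu}\varphi(u)$ again defines an element of $\D(\L)$, so each $\E_i f$ lies in $\Imag(\L)\subseteq\D(\B_m)$.
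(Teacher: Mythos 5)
Your proof is correct and follows essentially the same route as the paper: the authors also reduce all three parts to Proposition \ref{p1}(ii) (and the multiplicative action on Laplace densities), with part (iii) resting on exactly the identity $\frac{u}{1-e^{-u}}=\frac{mu}{1-e^{-mu}}\cdot\frac1m\sum_{i=0}^{m-1}e^{-iu}$ that your geometric-sum computation produces. Your write-up merely spells out the bookkeeping (and the harmless check that intermediate functions stay in $\Imag(\L)$) that the paper leaves as ``direct verification.''
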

\begin{proof} Parts (i) and (ii) follow by direct verification using Proposition \ref{p1}~(ii). Part (iii) follows from Proposition \ref{p1}~(ii) and the identity
 $$\frac{u}{1-e^{-u}}=\frac{mu}{1-e^{-mu}}\cdot \frac{1}{m}\sum_{i=0}^{m-1}e^{-iu},\quad u\in \Re_+.$$
\end{proof}
\subsection{} The relationship between the operators $\A_h$ and $\B_h$ is the following.

\begin{Thm}\label{t1}
Let $f=\L(\varphi)\in \Imag(\L)$ and $\varphi(u)=o(u^{\gamma})$, $u\to 0^+$,  for some $\gamma\in\Co_{-1^+}$. Then
 $$\B_h f(t)=-\sum_{n=0}^\infty f^{\prime}(t+hn)=-\A_h f^{\prime}(t).$$
\end{Thm}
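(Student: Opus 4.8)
The plan is to reduce the claimed identity to an equality of Laplace--Lebesgue transforms and then conclude by injectivity of $\L$ on $\Imag(\L)$. Since $f=\L(\varphi)$, formula \eqref{eq2b} gives $f'=\del f=\L(-u\varphi(u))$, so both $\B_h f$ and $\A_h f'$ can be exhibited as Laplace transforms of explicit kernels, and it suffices to match those kernels. The hypothesis $\varphi(u)=o(u^{\gamma})$ as $u\to 0^+$ is precisely what places $f'$ in the appropriate domain: it forces $-u\varphi(u)=o(u^{\gamma+1})$, which is the decay required by (the $\A_h$-analogue of) Proposition \ref{p2} for the series defining $\A_h f'$ to converge absolutely and locally uniformly. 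Thus the middle expression $\A_h f'(t)=\sum_{n\ge 0}f'(t+hn)$ is legitimate and yields an element of $\Imag(\L)$.

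First I would establish the $\A_h$-analogue of Proposition \ref{p2}. Conjugating $\A=\A_1$ by the scaling operator gives $\A_h=\Sc_{h^{-1}}\A\,\Sc_h$ (this follows termwise from $\E_h=\Sc_{h^{-1}}\E\,\Sc_h$), and combining this with the transform rule for $\Sc_h$ on $\Imag(\L)$ produces, for $g=\L(\psi)$ in the relevant domain,
\[
\A_h g=\L\!\left(\frac{\psi(u)}{1-e^{-hu}}\right).
\]
Applying this with $g=f'=\L(-u\varphi)$ yields a closed form for $\A_h f'$ as the Laplace transform of an explicit kernel built from $u\varphi(u)/(1-e^{-hu})$.

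Next I would compute $\B_h f$, where two interchangeable routes are available. The transform route uses Proposition \ref{p1}(ii) to write $\B_h f$ directly as a Laplace transform of a kernel of the same shape $u\varphi(u)/(1-e^{-hu})$. The operator route observes that $\A_h=-\Del_h^{-1}$ on the common domain --- indeed $\Del_h\A_h f=\sum_{n\ge0}\big(f(t+(n+1)h)-f(t+nh)\big)=-f(t)$ telescopes --- so that, writing $\B_h=\ln(\id+\Del_h)\,\Del_h^{-1}$ and invoking Proposition \ref{p1}(iii), the operator $\ln(\id+\Del_h)$ reduces to a derivative and $\B_h f$ is expressed through $\A_h$ applied to $f'$. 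Either way the theorem is reduced to a single kernel identity.

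The main obstacle is twofold. The analytic part is justifying the termwise interchange of summation and integration uniformly on compact $t$-intervals (Fubini--Tonelli together with the $o(u^{\gamma+1})$ decay of $-u\varphi$ near $0$ and the absolute convergence of $\L(\varphi)$ on $\Co_{0^+}$); this simultaneously certifies $f'\in\D(\A_h)$ and the membership of both sides in $\Imag(\L)$. The genuinely delicate part --- the crux --- is the exact comparison of the two kernels, which amounts to carefully tracking the scaling constant generated by $\Sc_h$, equivalently the normalization of $\ln(\id+\Del_h)$ supplied by Proposition \ref{p1}(iii). Once the kernels are seen to coincide, the equality of transforms forces $\B_h f=-\A_h f'$, and unwinding the definition of $\A_h$ gives the pointwise form $\B_h f(t)=-\sum_{n\ge0}f'(t+hn)$.
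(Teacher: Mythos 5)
Your overall strategy---realize both sides as Laplace transforms of explicit kernels and conclude by injectivity of $\L$---is the natural one, and it is evidently the intended route: the paper itself states Theorem \ref{t1} without proof, importing it from \cite{Ion}, and Propositions \ref{p1} and \ref{p2} are precisely the needed tools. Your domain bookkeeping is also correct: $f'=\L(-u\varphi)$ with $-u\varphi(u)=o(u^{\gamma+1})$ is exactly the hypothesis required by the $\A_h$-analogue of Proposition \ref{p2}, which via $\A_h=\Sc_{h^{-1}}\A\,\Sc_h$ gives
\[
-\A_h f'(t)=\L\left(\frac{u\,\varphi(u)}{1-e^{-hu}}\right)(t),
\]
together with the absolute and locally uniform convergence certifying $f'\in\D(\A_h)$. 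But the step you explicitly defer as ``the crux'' is exactly where the argument breaks. Proposition \ref{p1}~(ii) does \emph{not} produce a kernel ``of the same shape'' $u\varphi(u)/(1-e^{-hu})$ as you quote; it gives
\[
\B_h f=\L\left(\frac{hu}{1-e^{-hu}}\,\varphi(u)\right),
\]
with a factor $h$ that you have silently dropped (this factor is forced by the definition $\B_h=\ln(\id+\Del_h)/\Del_h$, since $\Del_h$ acts on $\Imag(\L)$ as multiplication by $e^{-hu}-1$ and $\ln(\id+\Del_h)$ as multiplication by $-hu$). The two kernels therefore differ by the constant $h$ and coincide only when $h=1$. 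Your operator route exposes the same constant: $\ln(\id+\Del_h)=h\del$ by Proposition \ref{p1}~(iii) and $\Del_h^{-1}=-\A_h$ by your telescoping argument, whence $\B_h f=h\del(-\A_h f)=-h\A_h f'$, not $-\A_h f'$. A sanity check with $f(t)=1/t=\L(1)$ (take $\gamma=-1/2$): Proposition \ref{p1}~(ii) yields $\B_h f(t)=h^{-1}\zeta(2,t/h)$, whereas $-\A_h f'(t)=\sum_{n\geq 0}(t+nh)^{-2}=h^{-2}\zeta(2,t/h)$.

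So a faithful execution of your plan proves $\B_h f(t)=-h\sum_{n\geq 0}f'(t+nh)$, which matches the statement verbatim only for $h=1$---which happens to be the only case the present paper actually exploits (e.g.\ in Corollary \ref{ac} and Corollary \ref{cor3}). For general $h$, under the paper's stated normalizations of $\B_h$ and $\A_h$ (the latter defined in \S\ref{series-ad} without a mesh factor), the identity as printed is inconsistent with Proposition \ref{p1}~(ii), and your proof cannot close without either recording the factor $h$ or renormalizing $\A_h$ as the mesh-$h$ discrete integral $h\sum_{n\geq 0}\E_h^{n}$. Since you yourself identified the tracking of the scaling constant generated by $\Sc_h$ as the delicate point, asserting ``once the kernels are seen to coincide'' without performing that one computation is a genuine gap, not a routine omission: the asserted coincidence is false for $h\neq 1$.
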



\subsection{} For the following result we refer to \cite{Ion}*{Proposition 7.1, Corollary 7.2}.
\begin{Prop}\label{p3}
Let $f=\L(\varphi)\in \Imag(\L)$, $h>0$ and $s\in \Co_{1^+}$. Then  
\begin{enumerate}[label={\roman*)}]
\item $f_{-s}(t)$ is holomorphic in $s\in \Co_{1^+}$ and 
$$
\del f_{-s}(t)=-s f_{-s-1}(t);
$$
\item
$f_{-s}\in \D(\B_h)$ and $\B_h f_{-s}(t)=(\B_h f)_{-s}(t)$;
\item If $\varphi(u)=o(u^\gamma)$, $u\to 0^+$, for some $\gamma\in\Co_{-1^+}$ then
$$
\B_h f_{-s}(t)=-\sum_{n=0}^\infty  f_{-s}^{\prime}(t+nh)=-\A_h f_{-s}^{\prime}(t);
$$
\item $\B_h f_{-s}(t)$ is holomorphic in $s\in \Co_{1^+}$.
\end{enumerate}
\end{Prop}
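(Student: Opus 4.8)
The plan is to exploit the fact that $f_{-s}$ is itself an element of $\Imag(\L)$, so that the whole proposition reduces to Proposition \ref{p1} and Theorem \ref{t1} applied to a modified density. Writing $f=\L(\varphi)$, one has
$$
f_{-s}=\L(\psi_s),\qquad \psi_s(u)=\frac{\varphi(u)\,u^{s-1}}{\Gamma(s)}.
$$
The first thing I would verify is that $\psi_s\in\D(\L)$ for every $s\in\Co_{1^+}$: near $u=0$ the weight $u^{s-1}$ is bounded (in fact $\to 0$), since $\Real(s-1)>0$, so local integrability is preserved, while near $+\infty$ a polynomial weight $u^{\Real(s)-1}$ is absorbed by comparing $e^{-\sigma u}$ against $e^{-\sigma'u}$ for $\sigma'<\sigma$ and invoking the absolute convergence of $\L(\varphi)$ on $\Co_{0^+}$. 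This places $f_{-s}$ in $\Imag(\L)$ and makes all the operator identities available.

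For part (i), I would establish holomorphy in $s$ by showing that $\int_0^\infty e^{-tu}u^{s-1}\varphi(u)\,du$ converges locally uniformly on compact sets $K\subset\Co_{1^+}$. On such a $K$, with $1<\sigma_1\le\Real(s)\le\sigma_2$, the integrand is dominated in modulus by $e^{-tu}|\varphi(u)|\max(u^{\sigma_1-1},u^{\sigma_2-1})$, which the estimate above shows is integrable uniformly in $s\in K$; Morera's theorem then yields holomorphy, the factor $1/\Gamma(s)$ being entire. The relation $\del f_{-s}=-s\,f_{-s-1}$ I would obtain by differentiating under the integral in $t$, which brings down a factor $-u$, and then rewriting $1/\Gamma(s)=s/\Gamma(s+1)$ via $\Gamma(s+1)=s\Gamma(s)$.

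Parts (ii)--(iv) are then essentially bookkeeping. Applying Proposition \ref{p1} to $f_{-s}=\L(\psi_s)$ gives $f_{-s}\in\D(\B_h)$ and $\B_h f_{-s}=\L\!\big(\tfrac{hu}{1-e^{-hu}}\psi_s(u)\big)$; on the other hand $\B_h f=\L\!\big(\tfrac{hu}{1-e^{-hu}}\varphi(u)\big)\in\Imag(\L)$, so $(\B_h f)_{-s}=\L\!\big(\tfrac{hu}{1-e^{-hu}}\varphi(u)\cdot\tfrac{u^{s-1}}{\Gamma(s)}\big)$, and the two densities coincide, proving (ii). For (iv) I would combine (ii) with part (i) applied to the element $\B_h f\in\Imag(\L)$, so that holomorphy of $(\B_h f)_{-s}$ transfers to $\B_h f_{-s}$. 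For part (iii) I would invoke Theorem \ref{t1} with $g=f_{-s}=\L(\psi_s)$; its growth hypothesis demands $\psi_s(u)=o(u^{\gamma'})$ for some $\gamma'$ with $\Real(\gamma')>-1$, and from $\varphi(u)=o(u^\gamma)$ one gets $\psi_s(u)=o(u^{\gamma+s-1})$ with $\Real(\gamma+s-1)=\Real(\gamma)+\Real(s)-1>\Real(\gamma)>-1$, precisely because $\Real(s)>1$. Theorem \ref{t1} then delivers $\B_h f_{-s}(t)=-\sum_{n\geq 0}f_{-s}'(t+nh)=-\A_h f_{-s}'(t)$.

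The only genuinely analytic work sits in part (i): securing the uniform-in-$s$ integrability that legitimizes both holomorphy and differentiation under the integral sign. Everything else follows from the single identity $f_{-s}=\L(\psi_s)$, the one delicate point being the observation in (iii) that the strict inequality $\Real(s)>1$ upgrades the admissible growth exponent, so that Theorem \ref{t1} continues to apply to $f_{-s}$ even when $\varphi$ only barely satisfies its own growth hypothesis.
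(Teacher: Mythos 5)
Your proposal is correct. Note that the paper itself offers no proof of Proposition \ref{p3}: it simply defers to the cited reference (Proposition 7.1 and Corollary 7.2 of \cite{Ion}), so there is no in-paper argument to compare against line by line. Your reduction is the natural one and it is sound: the single observation $f_{-s}=\L(\psi_s)$ with $\psi_s(u)=\varphi(u)u^{s-1}/\Gamma(s)$ does carry the whole proposition, and you verify the two points that actually need checking --- that $\psi_s\in\D(\L)$ (boundedness of $u^{\Real(s)-1}$ near $0$ because $\Real(s)>1$, absorption of the polynomial weight at infinity by trading $e^{-\sigma u}$ for $e^{-\sigma' u}$ with $\sigma'<\sigma$), and that the small-$u$ decay hypothesis of Theorem \ref{t1} is satisfied by $\psi_s$ with exponent $\gamma+s-1$, whose real part exceeds $-1$ since $\Real(s)>1$. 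Parts (ii) and (iv) then follow from Proposition \ref{p1} applied to $\L(\psi_s)$ and from part (i) applied to $\B_h f=\L\bigl(\tfrac{hu}{1-e^{-hu}}\varphi(u)\bigr)$, where you implicitly use (and should perhaps state) that multiplication by $\tfrac{hu}{1-e^{-hu}}$, which is bounded near $0$ and of linear growth at infinity, preserves $\D(\L)$ by the same absorption argument. The identity $\del f_{-s}=-sf_{-s-1}$ via differentiation under the integral (justified by \eqref{eq2b}) and $\Gamma(s+1)=s\Gamma(s)$ is exactly right.
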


\subsection{} The following subspace of $\D(\L)$ consists of functions that are dominated by some increasing function in the domain of $\L$ 
\begin{equation}
\D^{\iota}(\L):=\{\varphi~|~ |\varphi|\leq \psi,~\text{for some increasing}~\psi\in \D(\L)\}.
\end{equation}
\begin{Exp}
Let $\varphi\in \D(\L)$ such that $\varphi$ is continuous, bounded in a neighborhood of $0$, and $\varphi(t)=o(t^\gamma)$, $t\to +\infty$, for some $\gamma>0$. Then, $\varphi\in \D^{\iota}(\L)$.
\end{Exp}
Following \cite{Ion}*{Definition 7.6} we consider the following space of functions.
\begin{Def}\label{def-H}
Let $\H$ denote the space of functions $f(s,t): \Co\times \Re_+\to \Co$ satisfying the following properties
\begin{itemize}
\item $f(s,t)$ is differentiable in $t\in \Re_+$;
\item $f(s,t)$ and $\del f(s,t)$ are holomorphic in $s\in \Co$;
\item $f(t):=f(-1,t)=\L(\varphi)$ for some $\varphi\in \D^{\iota}(\L)$;
\item $f(-s,t)=f_{-s}(t)$ for $(s,t)\in \Co_{1^+}\times \Re_+$.
\end{itemize}
\end{Def}
\begin{convention} When we discuss functions $f(s,t)\in \H$ we assume that $f(t)$ is the corresponding function in the context of  Definition \ref{def-H}. We also employ the notation $f_s(\cdot)=f(s,\cdot)$ for any $s\in \Co$.
\end{convention}
We note that the space considered in \cite{Ion}*{Definition 7.6} allows for functions $f(s,t)$ with isolated singularities with respect to $s\in\Co$.  We restrict here to entire functions for simplicity. In particular, the elements of $\H$ satisfy the following properties.
\begin{Rem}\label{rem-deriv} Let $f(s,t)\in \H$. From Proposition \ref{p3} and the fact that $f(s,t)$ and $\del f(s,t)$ are entire, we obtain
$$
\del f(s,t)=s f(s-1,t)\quad \text{for all}\ (s,t)\in \Co\times\Re_+.
$$
In particular, $\del^{n} f(s,t)=s^{\underline{n}} f(s-n,t)$, and therefore $f(n,t)$ is a polynomial in $t$ of degree at most $n$.
\end{Rem}



\subsection{} We will use the following result \cite{Ion}*{Theorem 7.14, Corollary 7.16}.

\begin{Thm}\label{t2m}
For any $h>0$ the following hold
\begin{enumerate}[label={\roman*)}]
\item $\H\subset \D(\B_h)$ and $\H\subset \D(\E^h)$;
\item $\B_h: \H \to \H$ and $\E^h: \H \to \H$.
\end{enumerate}
\end{Thm}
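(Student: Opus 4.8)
The plan is to prove both statements simultaneously for $\B_h$ and $\E^h$ by fixing $f(s,t)\in\H$ with base function $f(-1,\cdot)=\L(\varphi)$, $\varphi\in\D^\iota(\L)$, and showing that for \emph{every} $s\in\Co$ the function $f_s=f(s,\cdot)$ lies in $\D(\B_h)\cap\D(\E^h)$, and that the resulting functions of $(s,t)$ satisfy the four defining properties of Definition \ref{def-H}. The region $\Real(s)<-1$ is already dispatched by the earlier results: there $f_s=f_{-(-s)}$ is a genuine Laplace–Mellin transform, so Propositions \ref{p1}, \ref{p5} and \ref{p3} give membership in both domains together with the explicit formulas $\B_h f_s=\L\!\big(\tfrac{hu}{1-e^{-hu}}\varphi(u)u^{-s-1}/\Gamma(-s)\big)$ and $\E^h f_s=\E_h f_s=\L\!\big(e^{-hu}\varphi(u)u^{-s-1}/\Gamma(-s)\big)$, all holomorphic in $s$ on that half-plane. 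The content of the theorem is therefore the extension of these facts to arbitrary $s\in\Co$.

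The heart of the argument is to show the defining series converge when applied to $f_s$ for every $s$. For this I would use the integral representation of iterated differences together with the derivative identity $\del^n f_s=s^{\underline n}f_{s-n}$ of Remark \ref{rem-deriv}. Since $f_s$ is smooth in $t$, one has $\Del_h^n f_s(t)=\int_{[0,h]^n}\del^n f_s(t+v_1+\cdots+v_n)\,dv$, and for $n>\Real(s)+1$ the function $f_{s-n}$ is a Laplace–Mellin transform, so this yields
\[
(-\Del_h)^n f_s(t)=\frac{1}{\Gamma(-s)}\int_0^\infty (1-e^{-hu})^n e^{-tu}\varphi(u)\,u^{-s-1}\,du .
\]
The crucial point is the factor $(-1)^n s^{\underline n}/\Gamma(n-s)$, which by the reflection formula equals $1/\Gamma(-s)$, so its modulus is \emph{independent of $n$}; this is exactly what prevents the factorial growth of the differences from overwhelming the series. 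Dominating $|\varphi|$ by an increasing $\psi\in\D(\L)$ and using $\sum_n (1-e^{-hu})^n/(n+1)=\tfrac{hu}{1-e^{-hu}}$ bounds the tail $\sum_{n\geq N}$ (with $N>\Real(s)+1$) by a convergent integral, the factor $(1-e^{-hu})^N\sim (hu)^N$ near $u=0$ compensating the singularity of $u^{-s-1}$. The finitely many remaining terms are finite linear combinations of the values $f_s(t+kh)$, hence finite. This gives $f_s\in\D(\B_h)$; the same computation with $\sum_n\binom{h}{n}(e^{-u}-1)^n=e^{-hu}$ and $\sum_n|\binom{h}{n}|<\infty$ gives $f_s\in\D(\E^h)$, proving (i).

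For (ii) I would first observe that these bounds are locally uniform in $(s,t)$: the constant $1/|\Gamma(-s)|$ is continuous and finite on all of $\Co$ (it vanishes at $s\in\Z_{\geq0}$, consistent with $f_s$ then being a polynomial, so that the series is a finite sum), and $N$ may be chosen uniformly on compacts. Hence $\B_h f_s(t)$ and $\E^h f_s(t)$ are holomorphic in $s$ and differentiable in $t$, with differentiation passing through the series, giving $\del\,\B_h f_s=\B_h\del f_s=s\,\B_h f_{s-1}$ and likewise for $\E^h$; so the first two properties of $\H$ hold. The base functions are $\B_h f=\L(\tfrac{hu}{1-e^{-hu}}\varphi)$ and $\E^h f=\L(e^{-hu}\varphi)$, and I would check these lie in $\D^\iota(\L)$: since $\tfrac{hu}{1-e^{-hu}}\leq 1+hu$ (equivalently $1+hu\leq e^{hu}$) and $e^{-hu}\leq 1$, the new integrands are dominated by $(1+hu)\psi$ and $\psi$, both increasing and in $\D(\L)$. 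Finally, the identity $\B_h f_{-s}=(\B_h f)_{-s}$ of Proposition \ref{p3}~(ii), together with its analogue for $\E^h$ obtained by applying Proposition \ref{p5} to $f_{-s}$, is precisely the fourth defining property on $\Co_{1^+}$; combined with holomorphy in $s$ it propagates to all of $\Co$ by the identity theorem. Thus $\B_h f,\E^h f\in\H$.

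The main obstacle is the convergence estimate of the second paragraph for $s$ with $-1\leq\Real(s)$ and near the nonnegative integers, where $f_s$ is no longer a Laplace–Mellin transform and the naive termwise Laplace representation fails for small $n$. The device that makes it work is the separation into finitely many low-order differences (handled crudely, as finite sums of shifts of the smooth function $f_s$) and a tail to which both the integral representation and the $n$-independent $\Gamma$-cancellation apply; verifying that this tail is absolutely and locally uniformly convergent — in particular integrable at $u=0$ — is the one genuinely quantitative step. Everything else is either supplied verbatim by Propositions \ref{p1}, \ref{p3}, \ref{p5} or is routine verification of the axioms of $\H$.
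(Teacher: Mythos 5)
The paper does not actually prove Theorem \ref{t2m}: it imports the statement wholesale from \cite{Ion}*{Theorem 7.14, Corollary 7.16}, so there is no in-paper argument to compare yours against line by line. What you have written is a self-contained proof, and its engine is sound. The central computation checks out: combining $\Del_h^n f_s(t)=\int_{[0,h]^n}\del^n f_s(t+v_1+\cdots+v_n)\,dv$ (legitimate because Remark \ref{rem-deriv} upgrades the bare differentiability in the definition of $\H$ to $\del^n f_s=s^{\underline{n}}f_{s-n}$, hence $C^\infty$ in $t$) with the fourth axiom applied to $f_{s-n}$ for $n>\Real(s)+1$ does give
\[
(-\Del_h)^n f_s(t)=\frac{(-1)^n s^{\underline{n}}}{\Gamma(n-s)}\int_0^\infty (1-e^{-hu})^n e^{-tu}\varphi(u)u^{-s-1}\,du=\frac{1}{\Gamma(-s)}\int_0^\infty (1-e^{-hu})^n e^{-tu}\varphi(u)u^{-s-1}\,du,
\]
since $\Gamma(s-n+1)\Gamma(n-s)=(-1)^{n+1}\pi/\sin(\pi s)$; the $n$-independence of the prefactor, the head/tail split at $N>\Real(s)+1$, and the bound $\sum_{n\geq N}(1-e^{-hu})^n/(n+1)\leq (1-e^{-hu})^{N-1}hu$ together give absolute, locally uniform convergence in $(s,t)$, which is exactly what membership in $\D(\B_h)$ and holomorphy in $s$ require. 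The degenerate case $s\in\Z_{\geq 0}$ is correctly absorbed by $1/\Gamma(-s)=0$ matching $s^{\underline{n}}=0$ for $n>s$. Your verification of the four axioms of $\H$ for the images is also right, including the domination $\tfrac{hu}{1-e^{-hu}}|\varphi|\leq(1+hu)\psi$ and $e^{-hu}|\varphi|\leq\psi$ with $(1+hu)\psi$ increasing and in $\D(\L)$, and the fourth axiom follows from Propositions \ref{p3}(ii) and \ref{p5} exactly as you say (no identity-theorem propagation is even needed, since the axiom is only demanded on $\Co_{1^+}$). The only steps I would ask you to write out rather than assert are the Fubini interchange of the $[0,h]^n$-integral with the Laplace integral (absolute convergence for $n>\Real(s)+1$ makes this routine) and the justification of termwise differentiation in $t$, which needs the same tail estimate applied to $\del f_s=sf_{s-1}$, i.e.\ with $\varphi$ replaced by $u\varphi$. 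Neither is a gap, just bookkeeping; as a replacement for the external citation your argument is complete in all essentials.
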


\begin{Cor}\label{ac} Let $f(s,t)\in \H$ and assume that  $f=\L(\varphi)$ with $\varphi(u)=o(u^\gamma)$, $u\to 0^+$, for some $\gamma\in \Co_{-1^+}$. Then, $\B f(-s,t)$ is the analytic continuation of 
$$
s\Di^f(s+1,t)=s\A f(-s-1,t)=\B f(-s, t), \quad s\in \Co_{1^+}.
$$

\end{Cor}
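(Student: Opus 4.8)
The plan is to establish the chain of equalities for $s\in\Co_{1^+}$ first, and then argue that analytic continuation forces the identity to extend to $s\in\Co_{0^+}$ (equivalently, to the full region where $\B f(-s,t)$ is holomorphic). For the convergent regime, I would start from Proposition \ref{p3}(iii), which under the hypothesis $\varphi(u)=o(u^\gamma)$ gives $\B f_{-s}(t)=-\A f_{-s}'(t)$ for $s\in\Co_{1^+}$. Using the convention $f(-s,t)=f_{-s}(t)$ from Definition \ref{def-H} together with the derivative identity of Remark \ref{rem-deriv}, namely $\del f(-s,t)=(-s)f(-s-1,t)$, I would rewrite $f_{-s}'(t)=-s\,f(-s-1,t)=-s\,f_{-s-1}(t)$. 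Substituting this into $-\A f_{-s}'(t)$ produces $s\,\A f_{-s-1}(t)=s\,\A f(-s-1,t)$. Finally, unwinding the definition of the Dirichlet-type series, $\A f(-s-1,t)=\sum_{n\geq0} f(-s-1,t+n)=\Di^f(s+1,t)$ (recall $\Di^f(s,t)=\sum_{n\geq0}f(-s,t+n)$), which closes the triangle $\B f(-s,t)=s\Di^f(s+1,t)=s\A f(-s-1,t)$ on the half-plane $\Co_{1^+}$.

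The second half of the argument is the analytic-continuation statement itself. By Theorem \ref{t2m}, $\B f\in\H$, and membership in $\H$ guarantees (via the holomorphy conditions in Definition \ref{def-H}) that $\B f(s,t)$ is entire in $s$; in particular $\B f(-s,t)$ is holomorphic on all of $\Co$, and certainly on $\Co_{0^+}$. On the other hand, the series $\Di^f(s+1,t)$ converges only for $s\in\Co_{1^+}$ (this is where the region of absolute convergence of $\A$ applied to $f_{-s-1}$ lives), so the function $s\mapsto s\Di^f(s+1,t)$ is a priori only defined there. The equality $\B f(-s,t)=s\Di^f(s+1,t)$ established on $\Co_{1^+}$ then exhibits the entire function $\B f(-s,t)$ as an analytic extension of $s\Di^f(s+1,t)$ past the line $\Real(s)=1$. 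By the identity theorem for holomorphic functions, this extension is unique, which is precisely the assertion that $\B f(-s,t)$ \emph{is} the analytic continuation of $s\Di^f(s+1,t)$.

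I expect the main obstacle to be bookkeeping rather than any deep difficulty: one must keep careful track of the sign conventions and the shift $s\mapsto s+1$ relating $\Di^f(s+1,t)$ to $f_{-s-1}$, and verify that the hypothesis $\gamma\in\Co_{-1^+}$ is exactly what is needed so that $f'=\del f$ again satisfies the decay hypothesis of Proposition \ref{p3}(iii) after differentiation. Concretely, since $\del f_{-s}=-s f_{-s-1}$ and $f_{-s-1}=\L(\varphi(u)u^{s}/\Gamma(s+1))$, the relevant local behavior at $u\to0^+$ is governed by $\varphi(u)u^s$, and one should confirm that the $o(u^\gamma)$ condition propagates so that $\A f_{-s-1}(t)$ converges absolutely and locally uniformly for $s\in\Co_{1^+}$. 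Once these compatibility checks are in place, the two displayed equalities are immediate from Proposition \ref{p3}, and the analytic-continuation conclusion follows formally from holomorphy of $\B f\in\H$ together with the identity theorem.
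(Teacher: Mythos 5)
Your argument is correct and matches the paper's intended derivation: the paper states Corollary \ref{ac} without a written proof precisely because it is the immediate combination of Proposition \ref{p3} (parts (i) and (iii), giving $\B f_{-s}=-\A f_{-s}'=s\A f_{-s-1}=s\Di^f(s+1,\cdot)$ on $\Co_{1^+}$) with Theorem \ref{t2m} (so $\B f\in\H$ is entire in $s$), followed by the identity theorem — exactly the two steps you carry out. Your closing compatibility check on the decay exponent is also sound, since $f_{-s-1}=\L(\varphi(u)u^{s}/\Gamma(s+1))$ satisfies the hypothesis of Proposition \ref{p2} whenever $s\in\Co_{1^+}$ and $\gamma\in\Co_{-1^+}$.
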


\begin{Cor}\label{cor3}
Let $f(s,t)\in \H$ and assume that  $f=\L(\varphi)$ with $\varphi(u)=o(u^\gamma)$, $u\to 0^+$, for some $\gamma\in \Co_{-1^+}$. Then, for any $N\in \Z_{\geq 0}$, $s\in \Co$, we have
$$
s\sum_{n=0}^{N-1} f(-s-1,t+n)= \B f(-s,t)-\B f(-s,t+N).
$$
\end{Cor}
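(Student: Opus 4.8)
The plan is to first establish the identity on the half-plane $\Co_{1^+}$, where $\B f(-s,t)$ carries the series representation of Corollary \ref{ac}, and then to propagate it to all of $\Co$ by analytic continuation. The case $N=0$ is trivial, since both sides vanish, so I assume $N\geq 1$ throughout.

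First I would fix $t\in\Re_+$ and $N\in\Z_{\geq 0}$ and restrict to $s\in\Co_{1^+}$. Taking $h=1$ (so that $\B=\B_1$), the hypothesis $\varphi(u)=o(u^\gamma)$ lets me apply Corollary \ref{ac} to write
\[
\B f(-s,t)=s\,\A f(-s-1,t)=s\sum_{n=0}^\infty f(-s-1,t+n),
\]
the last series converging absolutely and locally uniformly in $t$ because $f(-s-1,\cdot)\in\D(\A)$ for $s\in\Co_{1^+}$ (this is exactly what Corollary \ref{ac} presupposes). Replacing $t$ by $t+N$ and reindexing $n\mapsto n-N$ gives $\B f(-s,t+N)=s\sum_{n=N}^\infty f(-s-1,t+n)$; subtracting the two absolutely convergent series cancels the common tails and leaves
\[
\B f(-s,t)-\B f(-s,t+N)=s\sum_{n=0}^{N-1} f(-s-1,t+n),
\]
which is the claimed identity, valid a priori only for $s\in\Co_{1^+}$.

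It then remains to check that both sides extend to entire functions of $s$ and to invoke the identity theorem. The left-hand side $s\sum_{n=0}^{N-1} f(-s-1,t+n)$ is a finite sum of functions entire in $s$, since each $f(s,t)\in\H$ is holomorphic — indeed entire — in $s$; hence it is entire. For the right-hand side I would use Theorem \ref{t2m}(ii): because $f\in\H$ and $\B=\B_1\colon\H\to\H$, the function $g:=\B f$ again lies in $\H$, so $g(s,t)$ is entire in $s$ for each fixed $t$; consequently $g(-s,t)=\B f(-s,t)$ and $g(-s,t+N)=\B f(-s,t+N)$ are entire in $s$, and so is their difference. Two entire functions that coincide on the nonempty open set $\Co_{1^+}$ coincide on all of $\Co$, which yields the identity for every $s\in\Co$.

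The only genuinely load-bearing point — and hence the main, if modest, obstacle — is the entirety of the right-hand side in $s$: this is precisely where the structural result Theorem \ref{t2m} (that the Bernoulli operator maps $\H$ into $\H$) is essential, since it guarantees $\B f\in\H$ and thus holomorphy of $\B f(-s,t)$ on all of $\Co$. I would also remark that one can bypass analytic continuation entirely and argue on $\H$ for every $s$ at once: writing $\E^N-\id=\Del\sum_{k=0}^{N-1}\E^k$ and using $\Del\B=\ln(\id+\Del)$ together with $\ln(\id+\Del)f_{-s}=\del f_{-s}=-s\,f(-s-1,\cdot)$ (from Propositions \ref{p1} and \ref{p3} and Remark \ref{rem-deriv}) gives $(\id-\E^N)\B f_{-s}=s\sum_{k=0}^{N-1} f(-s-1,\cdot+k)$ directly; this route, however, requires justifying the infinite-order operator identities on the relevant domain, which the analytic-continuation argument neatly avoids.
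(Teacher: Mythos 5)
Your proposal is correct and follows essentially the same route as the paper: the paper's proof is exactly the observation that both sides are holomorphic in $s$ and agree on $\Co_{1^+}$ (via the series representation of Corollary \ref{ac} and cancellation of the common tails), hence agree on all of $\Co$. You have simply spelled out the details the paper leaves implicit, in particular that Theorem \ref{t2m} supplies the entirety of $\B f(-s,t)$ in $s$.
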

\begin{proof}
The functions on both sides of the equality are holomorphic in $s$ and agree for $s\in \Co_{1^+}$. In consequence, they agree for $s\in \Co$.
\end{proof}

\begin{Cor}\label{cor2} 
Let $f(s,t)\in \H$, $h>0$, $m\in \N$. Then, for $(s,t)\in \Co\times\Re$ we have
\begin{enumerate}[label={\roman*)}]
\item $\Sc_{h^{-1}}\B\Sc_h f_s= \B_h f_s$;
\item $\displaystyle \B f_s=\frac{1}{m}\sum_{i=0}^{m-1}\B_m \E_{i}f_s$;
\item $\displaystyle\<\B f_s , t\>=\frac{m^s}{m}\sum_{i=0}^{m-1}\<\B ~m^{-s}\Sc_m f_s , (t+i)/m\>$.
\end{enumerate}
\end{Cor}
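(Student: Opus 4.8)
The plan is to deduce all three identities from the operator relations on $\Imag(\L)$ recorded in Corollary \ref{cor1}, together with the domain and mapping statements of Theorem \ref{t2m}, propagating identities from a left half-plane to all of $\Co$ by analytic continuation where necessary. The key preliminary observation is that the fourth clause of Definition \ref{def-H} gives $f_{-s}=f(-s,\cdot)\in\Imag(\L)$ for $s\in\Co_{1^+}$, equivalently $f_s\in\Imag(\L)$ for all $s\in\Co_{-1^-}$, the region on which Corollary \ref{cor1} applies verbatim to $f_s$. Part (i) then requires no continuation at all: the scaling relations from the Notation section give $\Del\Sc_h=\Sc_h\Del_h$, hence $(-\Del)^n\Sc_h f_s=\Sc_h(-\Del_h)^n f_s$ for every $n$, and since $\H\subset\D(\B_h)$ by Theorem \ref{t2m} the series $\B_h f_s=\sum_{n}\frac{(-\Del_h)^n}{n+1}f_s$ converges locally uniformly; applying $\Sc_h$ termwise (it is evaluation at $ht$ and so passes through a convergent series) shows $\Sc_h f_s\in\D(\B)$ with $\B\Sc_h f_s=\Sc_h\B_h f_s$, that is $\Sc_{h^{-1}}\B\Sc_h f_s=\B_h f_s$.

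For (ii) I would first restrict to $s\in\Co_{-1^-}$, where $f_s\in\Imag(\L)$ and Corollary \ref{cor1}(iii) yields $\B f_s=\tfrac1m\sum_{i=0}^{m-1}\B_m\E_i f_s$ directly. To pass to arbitrary $s$, note that by Theorem \ref{t2m} the operators $\B$, $\B_m$ and $\E^i=\E_i$ all preserve $\H$, so $\B f\in\H$ and $\B_m\E_i f\in\H$; since membership in $\H$ carries holomorphy in $s$, both sides of the identity are holomorphic in $s\in\Co$. Agreeing on the half-plane $\Co_{-1^-}$, they agree on all of $\Co$.

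For (iii), where the scale-invariance structure genuinely enters, I would begin by checking that $m^{-s}\Sc_m f_s$ is itself the $s$-slice of an element of $\H$: for $g(s,t):=m^{-s}f(s,mt)$ a short Laplace--Mellin computation gives $g(-s,t)=m^{s}f_{-s}(mt)=g_{-s}(t)$, so $g\in\H$ and $\B g_s=\B(m^{-s}\Sc_m f_s)$ is defined. Pulling the scalar $m^{-s}$ out of $\B$ makes the prefactor $m^s/m$ cancel it, reducing the right-hand side to $\tfrac1m\sum_{i=0}^{m-1}\<\B\Sc_m f_s,(t+i)/m\>$. Rewriting each term as $\<\Sc_{m^{-1}}\B\Sc_m f_s,t+i\>$ and applying (i) with $h=m$ turns it into $\tfrac1m\sum_{i}\<\B_m f_s,t+i\>$; since the integer shift $\E_i$ commutes with $\B_m$, this is $\tfrac1m\sum_{i}\<\B_m\E_i f_s,t\>$, which by (ii) equals $\<\B f_s,t\>$.

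The steps that actually carry weight are the holomorphy and continuation bookkeeping in (ii), and in (iii) the recognition that it is $m^{-s}\Sc_m f_s$ — not $\Sc_m f_s$ — that lies in $\H$, which is precisely what makes the factors $m^{\pm s}$ cancel; everything else is termwise manipulation licensed by the convergence from Theorem \ref{t2m}. The main obstacle is thus conceptual rather than computational: ensuring that the Laplace-side identities, which are available only on $\Imag(\L)$, are correctly propagated to all of $\Co$, and that every scaled function remains within a domain on which $\B$ is defined.
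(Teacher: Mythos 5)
Your proof is correct and follows essentially the same route as the paper: the identities are reduced to Corollary \ref{cor1} on the half-plane where $f_s\in\Imag(\L)$, using the key observation that $h^{-s}\Sc_h f\in\H$, and then extended to all of $\Co$ by holomorphy of both sides. Your direct termwise argument for (i) (via $\Del\Sc_h=\Sc_h\Del_h$) and your derivation of (iii) from (i) and (ii) are minor, valid reorganizations of the same computation.
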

\begin{proof}
Note that the function $h^{-s}\Sc_hf(s,t)=h^{-s}f(s,ht)\in \H$. The function and its derivative are clearly entire and 
$$
h^{s}f(-s,ht)=g_{-s}(t),\quad (s,t)\in \Co_{1^+}\times \Re_+,
$$
for $g(t)=\L(\varphi(h^{-1}u))$, verifying the remaining conditions. From Corollary \ref{cor1} 
$$
h^{-s}\Sc_{h^{-1}}\B ~h^s\Sc_h f(-s,t)= \B_h f(-s,t),\quad (s,t)\in \Co_{1^+}\times \Re_+,
$$
and since the functions on the both sides of the equality are entire we obtain the first claim. The second and third claim follow in a similar fashion from Corollary \ref{cor1}. 
\end{proof}

\begin{Cor}\label{cor4}
Let $h, h^\prime>0$. Then, on $\H$ we have
\begin{enumerate}[label={\roman*)}]
\item $\E^h=\E_h$;
\item $\B_h\E_{h^\prime}=\E_{h^\prime}\B_h$.
\end{enumerate}
\end{Cor}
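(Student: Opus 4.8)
The plan is to deduce both identities from their already-established analogues on $\Imag(\L)$ by analytic continuation in the complex parameter $s$. Recall that for $f(s,t)\in\H$ and $s\in\Co_{1^+}$ one has $f_{-s}=f(-s,\cdot)\in\Imag(\L)$ by Definition \ref{def-H}; equivalently, $f_w=f(w,\cdot)\in\Imag(\L)$ whenever $w\in\Co_{-1^-}$. On $\Imag(\L)$ part (i) is exactly the equality $\E^h=\E_h$ of Proposition \ref{p5}(ii), and part (ii) is exactly Corollary \ref{cor1}(ii). Hence, for every $f\in\H$, the two sides of each identity agree \emph{as functions of $t$} whenever the parameter $w$ lies in the open half-plane $\Co_{-1^-}$.

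First I would verify that all functions appearing on either side of (i) and (ii) are again elements of $\H$, so that they are holomorphic in the first variable. For $\E^h$ and $\B_h$ this is precisely Theorem \ref{t2m}(ii). For the pure shift $\E_{h'}$ a direct check suffices: $(\E_{h'}f)(s,t)=f(s,t+h')$ is differentiable in $t$ and holomorphic in $s$, its value at $s=-1$ equals $\L(e^{-h'u}\varphi(u))$ with $e^{-h'u}\varphi\in\D^{\iota}(\L)$ (since $|e^{-h'u}\varphi|\leq|\varphi|$), and the Laplace--Mellin identity $f_{-s}(t+h')=\L(e^{-h'u}\varphi(u)u^{s-1}/\Gamma(s))(t)$ shows the fourth condition of Definition \ref{def-H} holds; thus $\E_{h'}f\in\H$. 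Consequently, in part (i) both $\E^h f$ and $\E_h f$ lie in $\H$, and in part (ii) both $\B_h(\E_{h'}f)$ and $\E_{h'}(\B_h f)$ lie in $\H$ (using $\E_{h'}f,\B_h f\in\H$ together with Theorem \ref{t2m}(ii)).

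Finally I would invoke the identity theorem. In each of (i) and (ii) the two sides, for fixed $t$, are entire functions of the parameter $s$ (being the first argument of elements of $\H$); since they coincide on the open half-plane $\Co_{-1^-}$, which has accumulation points, they coincide for all $s\in\Co$. Evaluating at an arbitrary $(s,t)\in\Co\times\Re_+$ yields the stated operator identities on all of $\H$. The only non-mechanical ingredient is the stability of $\H$ under $\B_h$ and $\E^h$, which is handed to us by Theorem \ref{t2m}; once that is in place the argument is a routine continuation in $s$, entirely parallel to the proofs of Corollaries \ref{cor2} and \ref{cor3}, and I do not anticipate any genuine obstacle beyond the bookkeeping of confirming $\E_{h'}$ preserves $\H$.
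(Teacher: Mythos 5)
Your argument is correct and is exactly the paper's: the paper's proof of Corollary \ref{cor4} simply cites Proposition \ref{p5}, Corollary \ref{cor1}, and the fact that $\H$ consists of entire functions, i.e., the same analytic-continuation-from-$\Imag(\L)$ strategy you carry out. You merely make explicit the routine bookkeeping (stability of $\H$ under $\E_{h'}$, $\E^h$, $\B_h$ and the identity theorem in $s$) that the paper leaves implicit.
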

\begin{proof}
The conclusions directly follow from Proposition \ref{p5}, Corollary \ref{cor1} and the fact that $\H$ consists of entire functions.
\end{proof}

\section{Main results}\label{main}

\subsection{}\label{seq} For this section let us fix $f(s,t)\in \H$ such that  $f=\L(\varphi)$ with $\varphi(u)=o(u^\gamma)$, $u\to 0^+$, for some $\gamma\in \Co_{-1^+}$.  Denote $f^m(s,t)=m^{-s}S_{m}f(s,t)\in \H$. Note that $f^m(t)=\L(\varphi(m^{-1}u))$ and $\varphi(m^{-1}u)=o(u^\gamma)$, $u\to 0^+$.
Also fix $t_0>0$ and define the sequence $\ba$ by $$a_0=0, \quad a_n=f(s-1,t_0+n-1),\quad n\geq 1.$$
The series corresponding to $\ba$ is therefore
$$
\sum_{n=0}^\infty f(s-1,t_0+n)=\Di^f(1-s,t_0).
$$
As assured by Proposition \ref{p3} iii), this series is convergent for $s$ in some right half-plane and, by Corollary \ref{ac},  it has analytic continuation to $s\in\Co_+$ with a possible pole at $s=0$. Our main result shows that, if $s\neq 0$, this series is weakly renormalizable and the expectation of $\e[X_\ba]$ is expressed in terms of the analytic continuation of $\Di^f(-s+1,t_0)$.

\begin{Thm}\label{t2}
If $s\neq 0$, the series corresponding to $\ba$ is weakly renormalizable and 
$$
\e[X_\ba]=-\frac{1}{s}\left(\B f(s,t_0)-m^s \B f^m\left(s,\frac{t_0-1}{m}+1\right)\right).
$$
\end{Thm}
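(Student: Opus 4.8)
The plan is to replace both partial-sum sequences in $\bs_{[j]}$ by closed forms built from the Bernoulli operator, extend these explicitly using that the closed forms live in $\H$, and then let the average over residue classes collapse via Corollary \ref{cor2}(iii). Throughout write $\tau=\frac{t_0-1}{m}+1$. Since $a_0=0$ and $a_i=f(s-1,t_0+i-1)$ for $i\geq1$, one has $\sum_{i=0}^{n}a_i=\sum_{k=0}^{n-1}f(s-1,t_0+k)$, so Corollary \ref{cor3} (read with the summand $f(s-1,\cdot)$ placed in the $-s-1$ slot, i.e. with its parameter equal to $-s$) gives $\sum_{i=0}^{n}a_i=-\tfrac1s\bigl(\B f(s,t_0)-\B f(s,t_0+n)\bigr)$. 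For the subsampled sum, $\sum_{i=0}^{k}a_{mi}=\sum_{i=0}^{k-1}f\bigl(s-1,(t_0-1+m)+mi\bigr)$; rewriting this arithmetic progression of step $m$ through the step-$m$ form of Proposition \ref{p3}(iii) and converting $\B_m$ to $\B$ by Corollary \ref{cor2}(i) (equivalently, passing to the rescaled $f^m=m^{-s}\Sc_m f$ and applying Corollary \ref{cor3} to it) produces a matching closed form $-\tfrac{c_m}{s}\bigl(\B f^m(s,\tau)-\B f^m(s,\tau+k)\bigr)$ for an explicit power $c_m$ of $m$. Subtracting the two expresses $\bs_{[j],n}$ on $[j]$ entirely in terms of the two functions $t\mapsto\B f(s,t)$ and $t\mapsto\B f^m(s,t)$.

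Next I would identify the fractional-shift extension of each $\bs_{[j]}$. By Theorem \ref{t2m} both $\B f(s,\cdot)$ and $\B f^m(s,\cdot)$ lie in $\H$, and on $\H$ the Newton operator $\E^{i/m}$ coincides with the genuine shift $\E_{i/m}$ (Proposition \ref{p5} together with Corollary \ref{cor4}(i)). Hence the extension of \S\ref{sec: ext} is obtained simply by letting the integer arguments of these two $\H$-functions run over the reals: $\bs_{[j],n}$ extends to an expression of the form $(\text{const})+\tfrac1s\B f(s,t_0+n)-\tfrac{c_m}{s}\B f^m\bigl(s,\tau+\tfrac{n-j}{m}\bigr)$, in which only the final summand depends on $j$. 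This is the step where the formalism defining the extension is turned into an actual formula.

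Finally I would form $X_{\ba}=\tfrac1m\sum_{j=0}^{m-1}\bs_{[j]}$. Every $j$-independent term passes through the average unchanged, while the $j$-dependent term contributes $\tfrac{c_m}{s}\cdot\tfrac1m\sum_{j=0}^{m-1}\B f^m\bigl(s,\tau+\tfrac{n-j}{m}\bigr)$; after the reindexing $i=m-1-j$ and the identity $\tau+\tfrac{n-j}{m}=\tfrac{(t_0+n)+i}{m}$ this is exactly the right-hand side of Corollary \ref{cor2}(iii) with $t=t_0+n$, so it collapses to a multiple of $\B f(s,t_0+n)$. The crux is that this multiple is engineered to cancel the $\tfrac1s\B f(s,t_0+n)$ coming from the first partial sum, leaving $X_{\ba}(n)$ \emph{constant} in $n$ for $n\geq m$ and equal to the asserted value. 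Since $\pr$ assigns measure zero to finite sets, $\e[X_{\ba}]$ is then this constant for every admissible $\pr$, which is precisely weak renormalizability together with the stated formula; the ubiquitous factor $1/s$ is why the hypothesis $s\neq0$ is needed.

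I expect the delicate point to be exactly this final cancellation. It forces the power $c_m$ produced by the closed form of $\sum a_{mi}$ to match the power $m^{-s}$ produced by Corollary \ref{cor2}(iii), so that the $n$-dependence of $X_{\ba}$ disappears rather than surviving as a term whose expectation would depend on the choice of $\pr$. Keeping the powers of $m$ consistent across the scaling normalization $f^m=m^{-s}\Sc_m f$, the conversion $\B_m\leftrightarrow\B$ of Corollary \ref{cor2}(i), and the $m^s/m$ weight in Corollary \ref{cor2}(iii) is the entire arithmetic content of the proof, and is where I would concentrate the care; once the constancy of $X_{\ba}$ is secured, the passage to $\e[X_{\ba}]$ is immediate.
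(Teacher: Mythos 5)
Your outline reproduces the paper's strategy step for step: closed forms for both partial sums via Corollary \ref{cor3}, identification of the Newton extension with the genuine shift on $\H$ via Theorem \ref{t2m} and Corollary \ref{cor4}, and collapse of the residue average via Corollary \ref{cor2}(iii). But the one item you explicitly defer --- the value of the exponent $c_m$ --- is precisely where the argument must be checked, and it does not come out to the value the cancellation requires (the paper's own proof simply asserts $c_m=m^s$ at the corresponding step). From the definition $f^m(\sigma,t)=m^{-\sigma}f(\sigma,mt)$ one gets $f(s-1,mu)=m^{s-1}f^m(s-1,u)$, hence
\begin{equation*}
\sum_{i=0}^{(n-j)/m}a_{mi}=m^{s-1}\sum_{i=1}^{(n-j)/m}f^m\Bigl(s-1,\tfrac{t_0-1}{m}+i\Bigr),
\end{equation*}
so $c_m=m^{s-1}$, not $m^s$. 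After averaging over $j$, the $n$-dependent contribution of the subsampled sums carries the coefficient $\tfrac{1}{m}\cdot\tfrac{m^{s-1}}{s}$, while Corollary \ref{cor2}(iii) turns the sum over residues into $m^{1-s}\B f(s,t_0+n)$; the product is $\tfrac{1}{ms}\B f(s,t_0+n)$, which cancels only a $1/m$ fraction of the term $\tfrac{1}{s}\B f(s,t_0+n)$ coming from $\sum_{i\leq n}a_i$. The residue $\tfrac{1}{s}\bigl(1-\tfrac1m\bigr)\B f(s,t_0+n)$ is in general unbounded in $n$, so $X_\ba$ is not constant and the asserted value of $\e[X_\ba]$ is not reached.

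You can see this concretely for $f(s,t)=t^s$, $s=2$, $t_0=1$, $m=2$, i.e.\ $a_n=n$: here $f^m=f$, and since the Newton extension of a polynomial is exact, $\bs_{[0],n}=n^2/4$ and $\bs_{[1],n}=(n+1)^2/4$ for all $n$, whence $X_\ba(n)=(2n^2+2n+1)/8$, growing quadratically instead of equaling the predicted constant $1/4$. That constant is recovered if the subtracted partial sum is $m\sum_{i\leq (n-j)/m}a_{mi}$ --- consistent with the introduction's $U_n-4U_{\lfloor n/2\rfloor}=U_n-2\sum_{i\leq n/2}a_{2i}$ --- but not for $\bs_{[j]}$ as defined. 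So the architecture of your proposal matches the paper's, but it is incomplete at the decisive step, and carrying that step out honestly shows that the powers of $m$ do not match: some renormalization of $f^m$ or of $\bs_{[j]}$ is needed before the cancellation, and hence the theorem's formula, can be obtained.
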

\begin{proof}
Let $j\in \Lambda_m$ and $n\in [j]$. From Corollary \ref{cor3} we have
$$
\sum_{i=0}^n a_i=\frac{1}{s}\left(\B f(s,t_0+n)- \B f(s,t_0)\right).
$$
Similarly, we have
$$
 \sum_{i=0}^{(n-j)/m} a_{mi}= \sum_{i=1}^{(n-j)/m} f(s-1, t_0-1+mi)=m^s\sum_{i=1}^{(n-j)/m} f^m(s-1, \frac{t_0-1}{m}+i)
 $$
 which, again by Corollary \ref{cor3} equals
$$
 \frac{m^s}{s}\left(\B f^m\left(s,\frac{t_0-1}{m}+\frac{n-j}{m}+1\right)- \B f^m\left(s,\frac{t_0-1}{m}+1\right)\right).
$$
Therefore, 
\begin{equation*}
s_{[j],n}=\frac{1}{s}\left(\B f(s,t_0+n)- m^s\B f^m\left(s,\frac{t_0-1}{m}+\frac{n-j}{m}+1\right)\right)-\frac{1}{s}\left(\B f(s,t_0)-m^s \B f^m\left(s,\frac{t_0-1}{m}+1\right)\right).
\end{equation*}
Corollary \ref{cor4} i) applied to $\B f$ yields, for any $h>0$,
$$
\E^h \B f(s,t_0+n)=\E_h \B f(s,t_0+n)=\B \E_h f(s,t_0+n)= \B f(s,t_0+n+h),
$$
and similarly, $\E^h \B f^m(s,t_0+n)= \B f^m(s,t_0+n+h)$. This shows that the above formula for $s_{[j],n}$ is, in fact, valid for all $n\in \Z_{\geq m}$.

Now, for $n\in \Z_{\geq m}$,
\begin{equation*}
\begin{aligned}
X_\ba(n)=\frac{1}{m}\left(s_{[0],n}+\cdots+s_{[m-1],n}  \right)= &\frac{1}{s}\left(B f(s,t_0+n)-\frac{m^s}{m}\sum_{j=0}^{m-1}\B f^m\left(s,\frac{t_0-1}{m}+\frac{n-j}{m}+1\right)\right)   \\ & -\frac{1}{s}\left(\B f(s,t_0)-m^s \B f^m\left(s,\frac{t_0-1}{m}+1\right)\right).
\end{aligned}
\end{equation*}
Corollary \ref{cor2} iii) gives the following equality
\begin{equation*}
\begin{aligned}
B f(s,t_0+n)&= \frac{m^s}{m}\sum_{i=0}^{m-1}\B f^m\left(s,\frac{t_0+n+i}{m}\right)\\
&=\frac{m^s}{m}\sum_{j=0}^{m-1}\B f^m\left(s,\frac{t_0-1}{m}+\frac{n+1+m-1-j}{m}\right)\\
&=\frac{m^s}{m}\sum_{j=0}^{m-1}\B f^m\left(s,\frac{t_0-1}{m}+\frac{n-j}{m}+1\right).
\end{aligned}
\end{equation*}
Consequently, we have for $n\in \Z_{\geq m}$ the identity
$$
X_\ba(n)= -\frac{1}{s}\left(\B f(s,t_0)-m^s \B f^m\left(s,\frac{t_0-1}{m}+1\right)\right).
$$
Since this is a constant function, our claim immediately follows. It is important to remark that, because $X_\ba$ is constant, its expectation is independent of the choice of $\pr$. 
\end{proof}
\begin{Rem}
The value of $\e[X_\ba]$ can be expressed in terms of the analytic continuation of the Dirichlet series associated with $f(s,t)$ as follows
$$
\e[X_\ba]=\Di^f(1-s,t_0)-\Di^{\Sc_m f}(1-s,\frac{t_0+m-1}{m}).
$$
\end{Rem}
\begin{Rem}
It is tempting to argue that we should assign to the series $\displaystyle\sum_{n= 0}^\infty a_n$ the renormalized value $\Di^f(1-s,t_0)$ and, consequently, to the series $\displaystyle\sum_{n= 0}^\infty a_{mn}$ the renormalized value $\Di^{\Sc_m f}(1-s,\frac{t_0+m-1}{m})$. However, we do not know how to canonically distinguish these terms from $\e[X_\ba]$ without further assumptions on  properties of $\e[X_\ba]$ as a function of $m$. A strong argument in favor of such an outcome for the renormalization is the fact that $\Di^f(1-s,t_0)$ is indeed the value of the series for the values of $s$ for which the series is convergent. Nevertheless, Theorem \ref{t2} shows that, at least for this class of examples, the probabilistic renormalization and the analytic continuation are compatible. 
\end{Rem}

\subsection{} We will now ready to identify some strongly renormalizable series. Let $\ba$ be defined as in \S\ref{seq} for the choice of $t_0=1$. More precisely,
$$a_0=0, \quad a_n=f(s-1,n),\quad n\geq 1.$$
\begin{Rem}
Strong renormalization is expected only in the presence of some scale invariance for the corresponding sequence. In our context, 
the property $f(s,\cdot)=f^m(s,\cdot)$ translates to some scaling invariance property of $f(s,\cdot)$, specifically
\begin{equation}\label{eq2}
f(s,t)=m^{-s}f(s,tm).
\end{equation}
\end{Rem}
\begin{Prop}\label{t3}
If $s\neq 0$ and $f(s,\cdot)=f^m(s,\cdot)$ for all $m\geq 1$,  the series corresponding to $\ba$ is strongly renormalizable and 
$$
\e[X_\ba]=-(1-m^s)\frac{1}{s}\B f(s,1).
$$
Therefore, 
$$
\Sigma a_n=-\frac{1}{s}\B f(s,1)=\Di^f(1-s).
$$
\end{Prop}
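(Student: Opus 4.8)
The plan is to derive Proposition \ref{t3} as a direct specialization of Theorem \ref{t2}, using the scale-invariance hypothesis to collapse the two Bernoulli terms into one. First I would set $t_0=1$ in the formula of Theorem \ref{t2}. The base point of the second term, $\frac{t_0-1}{m}+1$, then simplifies to exactly $1$, so for every $m\geq 1$ and every $s\neq 0$ one obtains
$$
\e[X_\ba]=-\frac{1}{s}\left(\B f(s,1)-m^s\,\B f^m(s,1)\right).
$$

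Next I would invoke the hypothesis $f(s,\cdot)=f^m(s,\cdot)$. Since $f_s$ and $f^m_s$ coincide as functions of $t$, and the Bernoulli operator $\B$ acts only on the $t$-variable (indeed $\B\colon\H\to\H$ by Theorem \ref{t2m}), applying $\B$ to two identical functions yields identical results, whence $\B f^m(s,1)=\B f(s,1)$. Substituting this gives
$$
\e[X_\ba]=-\frac{1}{s}\bigl(1-m^s\bigr)\B f(s,1)=\left(-\frac{1}{s}\B f(s,1)\right)\bigl(1-m^s\bigr),
$$
an identity valid for all $m\geq 1$. Comparing with Definition \ref{sr}, this is precisely the defining relation of a strongly renormalizable series, with renormalized value $S=-\frac{1}{s}\B f(s,1)$ and critical exponent $s$; note that $S$ is independent of $m$, exactly as the definition requires.

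It then remains to identify $S$ with the analytically continued Dirichlet value $\Di^f(1-s)$. For this I would appeal to Corollary \ref{ac}, which identifies $\B f(-s,t)$ with the analytic continuation of $s\,\Di^f(s+1,t)$ from $s\in\Co_{1^+}$. Replacing $s$ by $-s$ there yields $\B f(s,t)=-s\,\Di^f(1-s,t)$ as an identity of holomorphic functions, and evaluation at $t=1$ gives $-\frac{1}{s}\B f(s,1)=\Di^f(1-s,1)=\Di^f(1-s)$. Since in the domain of convergence $\sum_{n\geq 0}a_n=\sum_{n\geq 1}f(s-1,n)=\Di^f(1-s,1)$, this establishes $\Sigma a_n=\Di^f(1-s)$.

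There is no serious obstacle here: the statement is essentially a corollary of Theorem \ref{t2}, so the work lies entirely in bookkeeping. The three points that genuinely require care are (i) checking that the choice $t_0=1$ forces the two base points of the Bernoulli terms to coincide at $1$, which is what makes the cancellation possible; (ii) the observation that the equality of functions $f_s=f^m_s$ transfers through $\B$, so that the scale invariance translates into an honest scalar factor $(1-m^s)$ holding uniformly in $m$; and (iii) tracking the sign and argument conventions of Corollary \ref{ac} when passing from $\B f(s,1)$ to $\Di^f(1-s)$.
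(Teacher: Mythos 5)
Your proposal is correct and follows essentially the same route as the paper: specialize Theorem \ref{t2} at $t_0=1$ (so both Bernoulli terms are evaluated at the base point $1$), use $f(s,\cdot)=f^m(s,\cdot)$ to collapse them into the factor $(1-m^s)$, and identify $-\frac{1}{s}\B f(s,1)$ with $\Di^f(1-s)$ via Corollary \ref{ac}. The paper's proof is merely a terser version of the same bookkeeping.
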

\begin{proof}
From Theorem \ref{t1} for $m\geq 1$, and since $t_0=1$, we obtain that,
$$
\e[X_\ba]=-\frac{1}{s}\left(\B f(s,1)-m^s \B f^m\left(s,1\right)\right).
$$
Our hypothesis $f=f^m$ further simplifies this expression to 
$$
\e[X_\ba]=-\frac{1}{s}\B f(s,1)(1-m^s)=\Di^f(1-s),
$$
which proves our claim.
\end{proof}

\begin{Thm}\label{t4}
Let $s\neq 1$. The series associated to the sequence $\ba$ defined by $a_0=0$, $a_n=1/n^s$, $n\geq 1$, is strongly renormalizable and
$$
\e[X_\ba]=(1-m^s)\zeta(s).
$$
Therefore,
$$
\Sigma 1/n^s=\zeta(s),
$$
where $\zeta(s)$ the Riemann zeta function.
\end{Thm}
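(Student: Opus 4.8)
The plan is to recognize $a_n = 1/n^s$ as a single member of the family studied in \S\ref{seq} and then invoke Proposition \ref{t3}. The relevant function is $f(x,t) = t^x$, singled out in the introduction; in the notation of \S\ref{seq} its associated Dirichlet series is the Hurwitz zeta function $\Di^f(x,t) = \sum_{n\geq 0}(t+n)^{-x}$, so that $\Di^f(x) = \Di^f(x,1) = \sum_{n\geq 1} n^{-x}$. To match $a_n = n^{-s}$ against the prescription $a_n = f(\sigma-1,\,t_0+n-1)$ I set $t_0 = 1$ and take the complex parameter of \S\ref{seq} to be $\sigma = 1-s$; then $f(\sigma-1,n) = f(-s,n) = n^{-s}$ exactly, and the running hypothesis $\sigma \neq 0$ of Proposition \ref{t3} becomes precisely $s \neq 1$.

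First I would check that $f(x,t)=t^x$ genuinely lies in $\H$. Its $t$-derivative is entire in $x$, and $f(-1,t) = 1/t = \L(\varphi)(t)$ for the constant density $\varphi \equiv 1$, since $\int_0^\infty e^{-tu}\,du = 1/t$. This $\varphi$ belongs to $\D^\iota(\L)$, being dominated by the (nondecreasing) function $\psi\equiv 1 \in \D(\L)$, and it satisfies the decay hypothesis $\varphi(u) = o(u^\gamma)$, $u\to 0^+$, for any $\gamma$ with $-1 < \Real(\gamma) < 0$, which is what the results of \S\ref{seq} require. The last clause of Definition \ref{def-H} also holds, since $f_{-x}(t) = \L(u^{x-1}/\Gamma(x))(t) = t^{-x} = f(-x,t)$ for $x\in\Co_{1^+}$. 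The decisive extra input for \emph{strong} renormalizability is scale invariance: because $f^m(x,t) = m^{-x}\Sc_m f(x,t) = m^{-x}(mt)^x = t^x = f(x,t)$, the hypothesis $f = f^m$ of Proposition \ref{t3} is met for every $m$.

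With these verifications the conclusion follows mechanically from Proposition \ref{t3} at the parameter $\sigma = 1-s \neq 0$:
\[
\e[X_\ba] = -\frac{1}{\sigma}\,\B f(\sigma,1)\,(1-m^{\sigma}) = (1-m^{1-s})\,\Di^f(1-\sigma) = (1-m^{1-s})\,\Di^f(s),
\]
so the series is strongly renormalizable with critical exponent $1-s$ and renormalized value $\Sigma a_n = \Di^f(s)$. It then remains to identify $\Di^f(s)$ with the Riemann zeta function: for $\Real(s)>1$ one has $\Di^f(s,1) = \sum_{n\geq 1} n^{-s} = \zeta(s)$ by direct summation, and Corollary \ref{ac} guarantees that the Bernoulli-operator expression $-\tfrac1\sigma \B f(\sigma,1)$ furnishes exactly the analytic continuation of this series, which agrees with the classical continuation $\zeta(s)$ for all $s\neq 1$. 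Hence $\Sigma 1/n^s = \zeta(s)$.

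The computations are short; the genuine content, and the only place demanding care, is the verification that $f(x,t)=t^x$ sits inside $\H$ (that is, that it really is a Laplace--Mellin transform, with constant density $\varphi\equiv 1$), together with the bookkeeping of the reparametrization $s\mapsto 1-s$, which is what converts the abstract critical exponent of Proposition \ref{t3} into the factor $1-m^{1-s}$ and pins the renormalized value to $\zeta(s)$ rather than $\zeta(1-s)$. As a consistency check, the introduction's computation for $\sum_{n\geq 1} n$ (the case $s=-1$) yields $(1-m^{2})\zeta(-1)$, matching the factor $1-m^{1-s}$ obtained here.
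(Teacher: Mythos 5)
Your route is the same as the paper's: the entire published proof is the single sentence that $f(s,t)=t^s$ satisfies the hypotheses of Proposition \ref{t3}, and your verification that $t^s\in\H$ (via $\varphi\equiv 1$, $\L(1)(t)=1/t$, $f_{-x}(t)=\L(u^{x-1}/\Gamma(x))(t)=t^{-x}$) together with the scale invariance $f^m=f$ supplies exactly the details that sentence suppresses. That part is correct and worth having.

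There is, however, a real discrepancy you never address: what you derive is $\e[X_\ba]=(1-m^{1-s})\zeta(s)$, while the theorem asserts $\e[X_\ba]=(1-m^{s})\zeta(s)$. Your bookkeeping is the faithful reading of Proposition \ref{t3}: the parameter $\sigma$ of \S\ref{seq} is tied to the exponent by $\sigma=1-s$ (forced by $a_n=f(\sigma-1,n)=n^{\sigma-1}$ and confirmed by the match of hypotheses $\sigma\neq 0\Leftrightarrow s\neq 1$), so the proposition yields the factor $1-m^{\sigma}=1-m^{1-s}$, not $1-m^{s}$. Thus your proposal proves a different display from the one in the statement, and you must either reconcile the two or flag the mismatch; as written you present $(1-m^{1-s})$ as though it were the claimed formula. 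The conclusion $\Sigma\, 1/n^s=\zeta(s)$ survives in either version, since the renormalized value is the coefficient $S$ in $S(1-m^{c})$ and equals $\zeta(s)$ for both exponents. Be cautious, too, with your closing consistency check: the introduction's computation for $\sum n$ subtracts the hand-rescaled quantity $4U_{\lfloor n/2\rfloor}$ rather than the $\sum_{i\le (n-j)/m}a_{mi}$ of Definition \ref{wr}, and in the convergent half-plane $\Real(s)>1$ Proposition \ref{prop-conv} forces $\e[X_\ba]=L-L_m=(1-m^{-s})\zeta(s)$, a third exponent. So the critical exponent cannot be certified by quoting Proposition \ref{t3} alone; one has to re-audit the normalization in the proof of Theorem \ref{t2}, where the step $f(s-1,t_0-1+mi)=m^{s}f^m\bigl(s-1,\tfrac{t_0-1}{m}+i\bigr)$ uses $m^{s}$ although the definition $f^m(s,t)=m^{-s}f(s,mt)$ gives $m^{s-1}$. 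A complete proof of Theorem \ref{t4} has to settle this normalization; your proposal, like the paper's one-line proof, inherits the ambiguity rather than resolving it.
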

\begin{proof}
The function $f(s,t)=t^s$ satisfies the hypotheses of Proposition \ref{t3}.
\end{proof}
We explain below that this is (up to a scalar multiple) the only strongly renormalizable series that satisfies the hypothesis of Proposition \ref{t3}.

\subsection{} Let us recall the following class of functions, originally defined by Kubert \cites{Kub, Lan} in the context of (algebraic) distributions, which are crucial for the definition of the concepts of measure and integral in number theory, notably for the construction of $p$-adic integration in Iwasawa theory. See \cites{KL1,KL2,KL3} for related work.

\begin{Def} Let $s\in \Co$. A function $F:(0,1)\to \Co$ is called $s$-Kubert function if it satisfies the identity
\begin{equation}
F(t)=m^{s-1}\sum_{i=0}^{m-1} F\left(\frac{t+i}{m}\right), \quad \text{for all }  t\in(0,1), ~m\in \N.
\end{equation}
\end{Def}
In the original number-theoretical  context, the Kubert functions have domain $\Rat/ \Z$; Kubert also introduced higher dimensional versions of these functions. Our definition follows Milnor \cite{Mil} who proved the following result.

Recall the definition of the Hurwitz zeta function $\zeta(s,t)=\Di^{t^s}(s,t)$,
and of the periodic zeta function $\ell(s,t)$, which is the meromorphic continuation of the series
$$
\sum_{n=0}^\infty \frac{e^{2\pi i nt}}{n^s}.
$$
For $t\not \in \Z$, $\ell(s,t)$ is entire in $s$; for $t\in \Z$, $\ell(s,t)=\zeta(s)$.

\begin{Thm} The complex vector space consisting continuous $s$-Kubert functions is two dimensional and consists of real analytic functions. Furthermore, this vectors space is spanned by the functions $-s\zeta(1-s,t)$ and $\ell(s,t)$.
\end{Thm}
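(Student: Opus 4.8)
The plan is to establish the two bounds on the dimension separately, together with the real-analyticity. For the lower bound I would exhibit two explicit, linearly independent continuous $s$-Kubert functions; for the upper bound I would show, via Fourier analysis, that every such function lies in their span.

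\emph{Lower bound.} I would first verify directly that $\ell(s,t)$ and $-s\zeta(1-s,t)$ both satisfy the Kubert identity. For $\ell(s,t)=\sum_{n\geq 1}e^{2\pi i nt}/n^s$ this is a short computation: substituting into $m^{s-1}\sum_{i=0}^{m-1}\ell(s,(t+i)/m)$ and using that $\sum_{i=0}^{m-1}e^{2\pi i n i/m}$ equals $m$ when $m\mid n$ and $0$ otherwise collapses the double sum back to $\ell(s,t)$. For $-s\zeta(1-s,t)$ the identity is exactly the classical distribution relation $\sum_{i=0}^{m-1}\zeta(\sigma,(t+i)/m)=m^{\sigma}\zeta(\sigma,t)$ applied with $\sigma=1-s$. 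Linear independence then follows from Hurwitz's formula, which expresses $\zeta(1-s,t)$ as a combination of $\ell(s,t)$ and $\ell(s,1-t)$ with nonzero coefficients: since $\ell(s,t)$ carries only positive Fourier frequencies while $\ell(s,1-t)$ carries only negative ones, the pair $\ell(s,t),\zeta(1-s,t)$ is independent. This gives dimension at least two and identifies the span with that of $\ell(s,t)$ and $\ell(s,1-t)$.

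\emph{Upper bound and regularity.} The averaging operators $(T_mF)(t)=m^{s-1}\sum_{i=0}^{m-1}F((t+i)/m)$ satisfy $T_mT_n=T_{mn}$, so a continuous $F$ is an $s$-Kubert function precisely when $T_pF=F$ for every prime $p$. The crucial computation is that, for a continuous periodic $F$ with Fourier coefficients $c_n$, the relation $T_mF=F$ is equivalent to $c_k=m^{s}c_{mk}$ for all $k$. For $s\neq 0$ this forces $c_0=0$, then $c_n=c_1\,n^{-s}$ for $n>0$ and $c_n=c_{-1}\,|n|^{-s}$ for $n<0$; hence $F=c_1\ell(s,t)+c_{-1}\ell(s,1-t)$, a two-dimensional family whose members are real-analytic on $(0,1)$ because $\ell(s,\cdot)$ is.

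The main obstacle is that a continuous $s$-Kubert function on the \emph{open} interval $(0,1)$ need not extend to a continuous periodic function, which is precisely what the presence of $\zeta(1-s,t)$ reflects: this solution develops a $t^{s-1}$-type singularity at the endpoints. The heart of the argument is therefore to control the boundary behaviour, by showing that the Kubert relation permits only a single type of endpoint singularity, subtracting an explicit multiple of $\zeta(1-s,t)$ to remove it, and thereby reducing to the periodic-continuous case treated above. Making this reduction precise, and in particular justifying the Fourier manipulation for functions represented only conditionally by their Fourier series, is where the real work lies; this is the content of Milnor's analysis in \cite{Mil}, which I would follow.
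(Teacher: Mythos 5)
Your outline is accurate (the distribution relation for the Hurwitz zeta function, the character-sum collapse for $\ell(s,t)$, the Fourier-coefficient relation $c_k=m^s c_{mk}$, and the endpoint-singularity issue are all correctly identified), and the paper itself offers no proof beyond citing Milnor's Theorem 1 and Appendix 1 — which is exactly where you, too, place the technical heart of the argument. So your proposal is correct and takes essentially the same route as the paper, just with more of Milnor's reasoning made explicit.
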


The statement is a combination of the results in  \cite{Mil}*{Theorem 1} and those in  \cite{Mil}*{Appendix 1}.

\subsection{}
We are now ready to show that the function $t^s$ is essentially the only element of $\H$ that satisfies the hypothesis of Proposition \ref{t2}.

\begin{Prop}
If $s\neq 0$ and $f(s,\cdot)=f^m(s,\cdot)$ for all $m\geq 1$, then $f(s,t)$ is a scalar multiple of $t^s$.
\end{Prop}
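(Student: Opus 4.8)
\subsection*{Proof proposal}

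The plan is to recognize $F:=\B f_s$ as a Kubert function and then to \emph{differentiate away} its periodic ambiguity before invoking any classification. By the hypothesis $f_s=f^m_s$ and Corollary~\ref{cor2}(iii), for every $m\in\N$ we have
$$
F(t)=m^{s-1}\sum_{i=0}^{m-1}F\!\left(\tfrac{t+i}{m}\right),\qquad t\in\Re_+,
$$
so $F$ is a continuous $s$-Kubert function. Milnor's theorem then writes $F$ as a combination of $-s\zeta(1-s,\cdot)$ and the periodic zeta function $\ell(s,\cdot)$; since one checks directly (via Theorem~\ref{t1}) that $-s\zeta(1-s,t)=\B(t^s)$ is precisely the Kubert function attached to $t^s$, the proposition is equivalent to the assertion that the periodic component $\ell(s,\cdot)$ cannot occur for $F$ coming from $\H$. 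Rather than excluding this through the analytic structure of $\Imag(\L)$, I would apply the discrete derivative $\Del$ to the displayed identity: the inner differences telescope, giving $\Del F(t)=m^{s-1}\Del F(t/m)$, that is
$$
G(mt)=m^{s-1}G(t)\qquad\text{for all }m\in\N,\ t\in\Re_+,\qquad G:=\Del F.
$$
Because $\Del F=\ln(\id+\Del)f_s=\del f_s=s f_{s-1}$ by Proposition~\ref{p1}(iii) and Remark~\ref{rem-deriv}, this says exactly that $f_{s-1}$ is scale-invariant of exponent $s-1$.

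Next I would upgrade this integer scale-invariance to a monomial. Setting $H(t):=t^{1-s}f_{s-1}(t)$, the relation above gives $H(mt)=H(t)$ for all $m\in\N$; replacing $t$ by $t/m$ yields $H(t/m)=H(t)$ as well, so $H$ is invariant under multiplication by every positive rational. Since $f_{s-1}=f(s-1,\cdot)$ is continuous in $t$ (as $f\in\H$ is differentiable in $t$), so is $H$, and the density of $\Rat_{>0}$ forces $H$ to be constant, say $H\equiv a$. Hence $f_{s-1}(t)=a\,t^{s-1}$.

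Finally I would recover $f_s$ and eliminate the integration constant. Integrating $\del f_s=s f_{s-1}=sa\,t^{s-1}$ (here $s\neq0$) gives $f_s(t)=a\,t^{s}+C$ for some $C$ independent of $t$. Substituting into the scale invariance $f_s(mt)=m^{s}f_s(t)$ leaves $C=m^{s}C$; as $s\neq0$ one has $m^{s}\neq1$ for at least one $m$ (equality for all $m$ would force $s=0$), so $C=0$ and $f(s,t)=a\,t^{s}$, a scalar multiple of $t^s$.

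The genuine obstacle is the passage from invariance under integer (hence rational) scalings to an honest power law: this is where continuity of $F=\B f_s$, guaranteed by $\B\colon\H\to\H$ (Theorem~\ref{t2m}), is indispensable, and it is what lets the density argument replace the full two-dimensional Kubert classification. I expect the alternative, Milnor-based route to be harder precisely at the step of showing the periodic coefficient vanishes, which would require arguing that $\ell(s,\cdot)$ is incompatible with lying in the image of the Laplace--Mellin transform; applying $\Del$ at the outset sidesteps that analysis entirely.
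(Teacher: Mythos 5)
Your argument is correct, and it takes a genuinely different route from the paper. The paper also begins by observing via Corollary \ref{cor2}(iii) that $\B f_s$ restricts to an $s$-Kubert function on $(0,1)$, but it then invokes Milnor's two-dimensional classification of continuous Kubert functions and devotes the rest of the proof to excluding the periodic component $\ell(s,\cdot)$: assuming $\B f(s,t)=\ell_s(t)$ on $(0,1)$, it propagates $f$ to $t+n$ using $\Del\B f=\del f$ and shows that continuity of $f(s,\cdot)$ at the positive integers would force $\zeta(s-n)=0$ for all $n\geq 1$, contradicting the known location of the zeros of $\zeta$. You bypass the classification entirely: differencing the Kubert identity telescopes it into the scaling law $f_{s-1}(mt)=m^{s-1}f_{s-1}(t)$, after which density of $\Rat_{>0}$ and continuity of $f(s-1,\cdot)$ force $t^{1-s}f_{s-1}(t)$ to be constant, and you integrate back up and kill the constant using $s\neq 0$. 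This is more elementary (no Milnor, no input about zeros of $\zeta$) and each step checks out. In fact it can be shortened further: the hypothesis $f(s,\cdot)=f^m(s,\cdot)$ \emph{is} the scale invariance $f(s,t)=m^{-s}f(s,mt)$ of \eqref{eq2}, so $t^{-s}f(s,t)$ is already invariant under all rational dilations and hence constant by the same density-plus-continuity argument; the detour through $\Del\B f_s$, the shift to $f_{s-1}$, and the elimination of the integration constant $C$ are all avoidable. Two small points to tighten: the continuity you actually use is that of $f(s-1,\cdot)$ (immediate from $f\in\H$), not that of $\B f_s$; and the parenthetical claim that $m^s=1$ for all $m$ forces $s=0$ deserves a line (e.g., $2^s=3^s=1$ would make $\ln 2/\ln 3$ rational).
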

\begin{proof}
Since $\Delta \B f(s,t)=\del f(s,t)$ for $f(s,t)\in \H$, it suffices to argue that if $f$ satisfies the  hypotheses of Proposition \ref{t2} then $\B f$ is a scalar multiple of $\B t^s$. 
If $f(s,\cdot)=f^m(s,\cdot)$ for all $m\geq 1$ then Corollary \ref{cor2} iii) implies that $\B f_s$ is an $s$-Kubert function. The function $-s\zeta(1-s,t)=\B t^s$ and $t^s$ satisfies the hypothesis of Proposition \ref{t2}. We only have to argue that $\ell(s,t)$, $t\in(0,1)$, is not the restriction of a function in $\B(\H)$.

Indeed, if there exists $f\in \H$ so that for a fixed $s\neq 0$ we have $\B f(s,t)=\ell_s(t)$, $t\in (0,1)$, then by using $\Delta \B f(s,t)=\del f(s,t)$ we obtain that 
$f(s,t+1)=\ell_s(t)+\del \ell_s(t)$, $t\in (0,1)$ and, inductively,
$$
f(s,t+n)=\sum_{k=0}^n \binom{n}{k}(2\pi i)^k \ell_{s-k}(t), \quad \text{for } t\in (0,1).
$$
The function $f(s,t)$ is continuous in $t$. The condition of continuity at positive integers translates into 
$$
\sum_{k=1}^n \binom{n}{k}(2\pi i)^k \zeta(s-k)=0, \quad \text{for all } n\geq 1.
$$
This implies that $\zeta(s-n)=0$ for all $n\geq 1$. This is in contradiction with the fact that the zeros of the zeta function are either negative even integers or contained in the critical strip.
\end{proof}

\section{Examples}

\subsection{} We conclude with some natural examples of weakly renormalizable series. A readily available source of elements of $\H$ is provided by \cite{Ion}*{Theorem 7.14}, specifically, by the application of general Bernoulli operators to known elements of $\H$. In particular, we obtain a large class of elements of $\H$ by applying Bernoulli operators to $t^s\in\H$. Many known Dirichlet series are of this type (see, e.g., \cite{Ion}*{\S 9}). 

We will list below some sequences $\ba$ defined as in \S\ref{seq} for $t_0=1$ (for simplicity). Recall our notation for the falling factorial in \S\ref{gamma}.
Each sequence is of the form $a_n=f(s-1,n)$, $n\geq 1$. The function $f(s,t)$ will always be a function given by the analytical continuation to $s\in \Co$ of an absolutely convergent series in the half-plane $s\in \Co_{0^-}$ of the form 
\begin{equation}\label{eq3}
f(s,t)=s^{\underline{\nu}}\sum_{i=0}^\infty c_{i+1}(t+i)^{s-\nu},\quad s<0,
\end{equation}
for some $\nu\in \Z_{\geq 0}$, and some sequence of coefficients $(c_{i+1})_{i\geq 0}$  for which the generating series $$c(z)=\sum_{i\geq 0}c_{i+1}z^i$$ represents a function that is holomorphic in the unit disk and has a pole of order $\nu$ at $z=1$. The analytic continuation of the series \eqref{eq3} is guaranteed, for example, by \cite{Ion}*{Theorem 7.14}. Therefore, the terms of the sequence $\ba$ arise from the analytic continuation of the tails of the series \eqref{eq3}. More precisely,
\begin{equation}\label{eq4}
a_n=(s-1)^{\underline{\nu}}\sum_{i=n}^\infty c_{i+1-n}i^{s-1-\nu},\quad s<0.
\end{equation}

\subsubsection{} For $c(z)=1/(1-z)$, we have $\nu=1$, $c_{i}=1$. The sequence $\ba$ consists essentially of values of the Hurwitz zeta function at integer $t$ values. For $c(z)=1/(1+z)$, we have $\nu=0$, $c_{i}=(-1)^{i-1}$. In this case $\ba$ consists essentially of values of the Dirichlet-Hurwitz eta function at integer $t$ values.

Similarly, if $\chi:\Z\to \Co$ is a Dirichlet character of modulus $k$,  let
$
c(z)=1/({1-z^k})\sum_{i=1}^{k}\chi(i)z^{i-1}.
$
In this case $\nu$ equals $1$ or $0$ depending on whether $\chi$ is a principal character or not. The sequence $\ba$ consists of tails of classical Dirichlet L-series.

\subsubsection{} Let $P\subset \Re^d$ be a rational, convex $d$-polytope. Let $c_{n}=|nP\cap \Z^d|$, $n\geq 1$. The  series 
$$
Ehr(z)=1+\sum_{n=1}^{\infty} c_n z^n
$$
 is called the Ehrhart series of $P$. With our notation, $ Ehr(z)=1+zc(z)$. The integer $\nu$ can be at most $d+1$, depending on $P$. In this case, the terms of  $\ba$ arise from the analytic continuation of  the series
\begin{equation}
a_n=(s-1)^{\underline{\nu}}\sum_{i=1}^\infty |iP\cap \Z^d| (i+n-1)^{s-1-\nu},\quad s<0.
\end{equation}

\subsubsection{} Let $a,b\in \Co$ and $x \in (-1,1)$. The generating function for Jacobi polynomials $P_n^{(a,b)}(x)$  is given by \cite{AAR}*{Theorem 6.4.2}
$$
c(z)=\sum_{n=0}^{\infty} P_n^{(a,b)}(x)z^n=2^{a+b}R^{-1}(R+1-z)^{-a}(R+1+z)^{-b}, \quad \text{where}\ R=R(x,z)=(z^2-2xz+1)^{1/2}.
$$
The expressions refer to the principal branch of the complex power function with the branch cut  along the negative real axis. We have $\nu=0$ and 
the terms of  $\ba$ arise from the analytic continuation of  the series
\begin{equation}
a_n= \sum_{i=1}^\infty P_i^{(a,b)}(x)(i+n-1)^{s-1},\quad s<0.
\end{equation}

\subsection{} A second source of examples is of arithmetic origin; we only record one here. Let $k$ be an algebraic number field of degree $N$ and consider the corresponding Dedekind zeta function $$\zeta_k(s)=\sum_{n=1}^\infty \frac{c_n}{n^s}.$$ 
The coefficient $c_n$ counts the number of ideals of norm $n$ in the  integer ring of $k$. In this case, $c(z)$ is still holomorphic in the unit disk, but has the unit circle as a cut. In this case, the terms of  $\ba$ arise from the analytic continuation of  the series
\begin{equation}
a_n= s\sum_{i=1}^\infty c_i(i+n-1)^{s-1},\quad s<0.
\end{equation}

While all the examples above lead to weakly renormalizable series, in the absence of some scale invariance, they are not expected to be strongly renormalizable.

\begin{bibdiv}
\begin{biblist}[\normalsize]
\BibSpec{article}{%
+{}{\PrintAuthors} {author}
+{,}{ }{title}
+{.}{ \textit}{journal}
+{}{ \textbf} {volume}
+{}{ \PrintDatePV}{date}
+{,}{ no. }{number}
+{,}{ }{pages}
+{,}{ }{status}
+{.}{}{transition}
}

\BibSpec{book}{%
+{}{\PrintAuthors} {author}
+{,}{ \textit}{title}
+{.}{ }{series}
+{,}{ vol. } {volume}
+{,}{ \PrintEdition} {edition}
+{,}{ }{publisher}
+{,}{ }{place}
+{,}{ }{date}
+{,}{ }{status}
+{.}{}{transition}
}

\BibSpec{collection.article}{
+{}{\PrintAuthors} {author}
+{,}{ \textit}{title}
+{.}{ In: \textit}{conference}
+{,}{ }{pages}
+{.}{ }{series}
+{,}{ vol. } {volume}
+{,}{ }{publisher}
+{,}{ }{place}
+{,}{ }{date}
+{,}{ }{status}
+{.}{}{transition}
}

\BibSpec{collection}{
+{}{\PrintAuthors}{editor}
+{,}{ \textit}{title}
+{.}{ In: \textit}{conference}
+{,}{ }{pages}
+{.}{ }{series}
+{,}{ vol. } {volume}
+{,}{ }{publisher}
+{,}{ }{place}
+{,}{ }{date}
+{,}{ }{status}
+{.}{}{transition}
}

\bib{AAR}{book}{
   author={Andrews, George E.},
   author={Askey, Richard},
   author={Roy, Ranjan},
   title={Special functions},
   series={Encyclopedia of Mathematics and its Applications},
   volume={71},
   publisher={Cambridge University Press, Cambridge},
   date={1999},
   pages={xvi+664},
   isbn={0-521-62321-9},
   isbn={0-521-78988-5},
   review={\MR{1688958}},
   doi={10.1017/CBO9781107325937},
}

\bib{BK}{collection.article}{
   author={Berry, M. V.},
   author={Keating, J. P.},
   title={$H=xp$ and the Riemann Zeros},
   conference={Supersymmetry and Trace Formulae: Chaos and Disorder},
   editor={Lerner, I. V.}, 
   editor={Keating, J. P.},
   editor={Khmelnitskii, D. E.},
   publisher={Kluwer}, 
   place={New York}, 
   date={1999},
   pages={355--367},
}

\bib{Ber}{article}{
   author={Berndt, Bruce C.},
   title={Chapter 6 of Ramanujan's second notebook},
   journal={Results Math.},
   volume={6},
   date={1983},
   number={1},
   pages={7--26},
   issn={0378-6218},
   review={\MR{714653}},
   doi={10.1007/BF03323319},
}

\bib{Cag}{article}{
   author={Caginalp, Gunduz},
   title={A renormalization approach to the Riemann zeta function at $-1$, $1+2+3+\dots \sim -1/12$},
   journal={AIMS Mathematics},
   volume={3},
   date={2018},
   number={2},
   pages={316--321},
   doi={10.3934/Math.2018.2.316},
}

\bib{Coll}{book}{
   author={Collins, John C.},
   title={Renormalization},
   series={Cambridge Monographs on Mathematical Physics},
   note={An introduction to renormalization, the renormalization group, and
   the operator-product expansion},
   publisher={Cambridge University Press, Cambridge},
   date={1984},
   pages={x+380},
   isbn={0-521-24261-4},
   review={\MR{778558}},
   doi={10.1017/CBO9780511622656},
}

\bib{Con}{article}{
   author={Connes, Alain},
   title={Trace formula in noncommutative geometry and the zeros of the
   Riemann zeta function},
   journal={Selecta Math. (N.S.)},
   volume={5},
   date={1999},
   number={1},
   pages={29--106},
   issn={1022-1824},
   review={\MR{1694895}},
   doi={10.1007/s000290050042},
}

\bib{CFP}{book}{
   author={Creswick, R. J.},
   author={Farach, H. A.},
   author={Poole{,} Jr., C. P.},
   title={Introduction to renormalization group methods in physics},
   publisher={John Wiley \& Sons, Inc.},
   date={1991},
}

\bib{DS}{book}{
   author={Dunford, Nelson},
   author={Schwartz, Jacob T.},
   title={Linear operators. Part I},
   series={Wiley Classics Library},
   note={General theory;
   With the assistance of William G. Bade and Robert G. Bartle;
   Reprint of the 1958 original;
   A Wiley-Interscience Publication},
   publisher={John Wiley \& Sons, Inc., New York},
   date={1988},
   pages={xiv+858},
   isbn={0-471-60848-3},
   review={\MR{1009162}},
}

\bib{GolPro}{article}{
   author={Golomb, Solomon W.},
   title={Probability, information theory, and prime number theory},
   note={A collection of contributions in honor of Jack van Lint},
   journal={Discrete Math.},
   volume={106/107},
   date={1992},
   pages={219--229},
   issn={0012-365X},
   review={\MR{1181916}},
   doi={10.1016/0012-365X(92)90549-U},
}

\bib{Har}{book}{
   author={Hardy, G. H.},
   title={Divergent Series},
   publisher={Oxford, at the Clarendon Press},
   date={1949},
   pages={xvi+396},
   review={\MR{0030620}},
}

\bib{Ion}{article}{
   author={Ion, Bogdan},
   title={Bernoulli operators and Dirichlet series},
   journal={},
   volume={},
   date={2020},
   number={},
   pages={},
   status={arXiv: 2002.03879},
}

\bib{KO}{article}{
   author={Kadane, Joseph B.},
   author={O'Hagan, Anthony},
   title={Using finitely additive probability: uniform distributions on the
   natural numbers},
   journal={J. Amer. Statist. Assoc.},
   volume={90},
   date={1995},
   number={430},
   pages={626--631},
   issn={0162-1459},
   review={\MR{1340516}},
}

\bib{Kub}{article}{
   author={Kubert, Daniel S.},
   title={The universal ordinary distribution},
   language={English, with French summary},
   journal={Bull. Soc. Math. France},
   volume={107},
   date={1979},
   number={2},
   pages={179--202},
   issn={0037-9484},
   review={\MR{545171}},
}

\bib{KL1}{article}{
   author={Kubert, Dan},
   author={Lang, Serge},
   title={Units in the modular function field. III. Distribution relations},
   journal={Math. Ann.},
   volume={218},
   date={1975},
   number={3},
   pages={273--285},
   issn={0025-5831},
   review={\MR{437498}},
   doi={10.1007/BF01349700},
}
		
\bib{KL2}{article}{
   author={Kubert, Dan},
   author={Lang, Serge},
   title={Distributions on toroidal groups},
   journal={Math. Z.},
   volume={148},
   date={1976},
   number={1},
   pages={33--51},
   issn={0025-5874},
   review={\MR{401652}},
   doi={10.1007/BF01187867},
}

\bib{KL3}{book}{
   author={Kubert, Daniel S.},
   author={Lang, Serge},
   title={Modular units},
   series={Grundlehren der Mathematischen Wissenschaften [Fundamental
   Principles of Mathematical Science]},
   volume={244},
   publisher={Springer-Verlag, New York-Berlin},
   date={1981},
   pages={xiii+358},
   isbn={0-387-90517-0},
   review={\MR{648603}},
}

\bib{Lan}{collection.article}{
   author={Lang, Serge},
   title={Relations de distributions et exemples classiques},
   language={French},
   conference={S\'{e}minaire Delange-Pisot-Poitou, 19e ann\'{e}e: 1977/78, Th\'{e}orie des
      nombres, Fasc. 2},
      publisher={Secr\'{e}tariat Math., Paris},
   date={1978},
   pages={Exp. No. 40, 6},
   review={\MR{520329}},
}

\bib{LF}{book}{
   author={Lapidus, Michel L.},
   author={van Frankenhuijsen, Machiel},
   title={Fractal geometry, complex dimensions and zeta functions},
   series={Springer Monographs in Mathematics},
   edition={2},
   note={Geometry and spectra of fractal strings},
   publisher={Springer, New York},
   date={2013},
   pages={xxvi+567},
   isbn={978-1-4614-2175-7},
   isbn={978-1-4614-2176-4},
   review={\MR{2977849}},
   doi={10.1007/978-1-4614-2176-4},
}

\bib{Mil}{article}{
   author={Milnor, John},
   title={On polylogarithms, Hurwitz zeta functions, and the Kubert
   identities},
   journal={Enseign. Math. (2)},
   volume={29},
   date={1983},
   number={3-4},
   pages={281--322},
   issn={0013-8584},
   review={\MR{719313}},
}


\bib{RV}{article}{
   author={Ruggiero, J. R.},
   author={Villani, A.},
   title={Application of analytic regularization to the
   Casimir forces},
   journal={Revista Brasiliera de F\' isica},
   volume={7},
   date={1977},
   number={3},
   pages={663--687},
}

\bib{SK}{article}{
   author={Schirokauer, Oliver},
   author={Kadane, Joseph B.},
   title={Uniform distributions on the natural numbers},
   journal={J. Theoret. Probab.},
   volume={20},
   date={2007},
   number={3},
   pages={429--441},
   issn={0894-9840},
   review={\MR{2337135}},
   doi={10.1007/s10959-007-0066-1},
}

\bib{SH}{article}{
   author={Schumayer, D.},
   author={Hutchinson, D.  A. W.},
   title={Physics of the Riemann Hypothesis},
   journal={Rev. Modern Phys.},
   volume={83},
   date={2011},
   number={2},
   pages={307--330},
}

\bib{WK}{article}{
   author={Wilson, Kenneth G.},
   author={Kogut, J.},
   title={The renormalization group and the $\eps$ expansion},
   journal={Physics Reports},
   volume={12},
   date={1974},
   number={2},
   pages={75--199},
}


\end{biblist}
\end{bibdiv}

\end{document}